\newlength{\defbaselineskip} \setlength{\defbaselineskip}{\baselineskip}
 \theoremstyle{plain} \newtheorem{thm}{Theorem}[section]
 \newtheorem{lemm}[thm]{Lemma}
\newtheorem{prop}[thm]{Proposition}
\newtheorem{obs}[thm]{Observation}
\theoremstyle{definition}
\newtheorem{rem}[thm]{Remark}
 \numberwithin{equation}{section}
\numberwithin{equation}{section}
\theoremstyle{definition}
\title{Some degenerations of $G_2$ and Calabi-Yau varieties}
\author{Micha{\l}\ Kapustka}
\begin{document}

\begin{abstract}
We introduce a variety $\hat{G}_2$ parameterizing isotropic five-spaces of a
general
degenerate four-form in a seven dimensional vector space. It is in a natural way
a degeneration of the variety $G_2$, the adjoint variety of the simple Lie group
$\mathbb{G}_2$.
It occurs that it is also the image of $\mathbb{P}^5$ by a system of
quadrics containing a twisted cubic. Degenerations of this twisted cubic to
three lines give rise to degenerations of $G_2$ which are toric Gorenstein
Fano fivefolds.
We use these two degenerations to construct geometric transitions between
Calabi--Yau threefolds.
We prove moreover that every polarized K3 surface of Picard number 2, genus 10, and admitting a $g^1_5$ 
appears as linear sections of the variety $\hat{G}_2$.
\end{abstract}
\maketitle
\section{Introduction}
We shall denote by $G_2$ the adjoint variety of the complex simple Lie group
$\mathbb{G}_2$.
In geometric terms this is a subvariety of the Grassmannian $G(5,V)$ consisting
of 5-spaces isotropic with respect to a chosen non-degenerate 4-form $\omega$ on
a 7-dimensional vector space $V$.
In this context the word non-degenerate stands for 4-forms contained in the open
orbit in $\bigwedge^4 V$ of the natural
action of $\operatorname{Gl}(7)$. It is known (see \cite{ott}) that this open orbit is the complement of a
hypersurface of degree 7. The hypersurface
is the closure of the set of 4-forms which can be decomposed into the sum of
3 simple forms. The expected number
of simple forms needed to decompose a general 4-form is also 3, meaning that our
case is defective. In fact this is the only
known example (together with the dual $(k,n)=(3,7)$) with $3 \leq k\leq n-3$ in which a
general $k$-form in an n-dimensional space cannot be decomposed
into the sum of an expected number of simple forms.
A natural question comes to mind. What is the variety $\hat{G}_2$ of 5-spaces
isotropic with respect to a generic 4-form from the hypersurface of degree 7?
>From the above point of view it is a variety which is not expected to exist.
We prove that the Pl\"ucker embedding of $\hat{G}_2$ is linearly isomorphic to the
closure of the image of $\mathbb{P}^5$ by the map defined by quadrics containing a fixed twisted cubic.
We check also that $\hat{G}_2$ is singular along a plane and appears as a flat
deformation of $G_2$.

Next, we study varieties obtained by degenerating the twisted cubic to a
reducible cubic. All of them appear to be flat deformations of $G_2$.
However only one of them appears to be a linear section of $G(5,V)$. It
corresponds to the variety of 5-spaces isotropic with respect to a 4-form from
the tangential variety to the Grassmannian $G(4,7)$. The two other degenerations
corresponding to configurations of lines give rise to toric degenerations of
$G_2$.
The variety $G_2$ as a spherical variety is proved in \cite{Br} to have such
degenerations, but for $G_2$ it is not clear whether the constructed degeneration is Fano.
In the context of applications, mainly for purposes of mirror symmetry of
Calabi--Yau manifolds, it is important that these degenerations lead to
Gorenstein toric Fano varieties. Our two toric varieties  are both Gorenstein and Fano,
they admit respectively 3 and 4 singular strata of codimension 3 and degree 1.
Hence the varieties obtained by intersecting these toric 5-folds with a quadric
and a hyperplane have 6 and 8 nodes respectively. The small resolutions of these
nodes are Calabi--Yau threefolds which are complete intersections in smooth
toric Fano 5-folds and are connected by conifold transition to the Borcea Calabi--Yau
threefolds of degree 36, which are sections of $G_2$ by a hyperplane and a
quadric, and will be denoted $X_{36}$.  This is the setting for the methods developed in
\cite{BCKS} to work and provide a partially conjectural construction of mirror. 
Note that in \cite{BK} the authors found a Gorenstein
toric Fano fourfold whose hyperplane
section is a nodal Calabi--Yau threefold admitting a smoothing which has the
same hodge numbers, degree, and degree of the second Chern class as $X_{36}$. It follows by a theorem of Wall that it
is diffeomorphic to it and by connectedness of the Hilbert scheme is also a flat deformation of it. However, a priori the two varieties can be in
different components of the Hilbert scheme, hence do not give rise to a properly
understood conifold transition. In this case it is not clear what is the
connection between the mirrors of these varieties.

The geometric properties of $\hat{G}_2$ are also used in the paper for the
construction of another type of geometric transitions. A pair of geometric
transitions joining  $X_{36}$ and the complete intersection of a quadric and a quartic in
$\mathbb{P}^5$. The first is a conifold transition involving a small contraction of two nodes
the second a geometric transition involving a primitive contraction of type III.

In the last section we consider a different application of the considered
constructions. We apply it to the study of polarized K3 surfaces of genus 10.
By the Mukai linear section theorem (see \cite{Muk1}) we know that a generic polarized K3
surface of genus 10 appears as a complete linear section of $G_2$. A
classification of the non-general cases has been presented in \cite{JK}.
The classification is however made using descriptions in scrolls,
which is not completely precise in a few special cases. We use our construction
to clarify one special case in this classification. This is the case of polarized  K3 surfaces
$(S,L)$ of genus 10 having a $g^1_5$ (i.e. a smooth representative of $L$ admits a
$g^1_5$). In particular we prove that a smooth linear section of $G_2$ does not admit a $g^1_5$. Then, we prove that
each smooth two dimensional linear section of $\hat{G}_2$ has a $g^1_5$ and that K3
surfaces appearing in this way form a component of the moduli space of such surfaces.
More precisely we get the following. 
\begin{prop} \label{one g15}
Let $(S,L)$ be a polarized K3 surface of genus 10 such that $L$ admits exactly one $g^1_5$, 
then $(S,L)$ is a proper linear section of one of the four considered degenerations of $G_2$.
\end{prop}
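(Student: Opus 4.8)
The plan is to convert the $g^1_5$ into an elliptic pencil on $S$, read off a distinguished rational cubic from the Picard lattice, and then recognise $(S,L)$ inside $\hat G_2$ or one of its degenerations through the $\mathbb{P}^5$-model used to construct them. Since a curve of genus $10$ has Brill--Noether number $\rho(10,1,5)=-2$, the pencil is exceptional; by the theory of special linear systems on K3 sections (Green--Lazarsfeld, Donagi--Morrison, Knutsen) it is cut out by a base-point-free class $E\in\operatorname{Pic}(S)$ with $E^2=0$ and $E\cdot L=5$. Thus $\operatorname{Pic}(S)\supseteq\langle L,E\rangle$, with Gram matrix $\bigl(\begin{smallmatrix}18&5\\5&0\end{smallmatrix}\bigr)$, and one computes that $C_0:=L-2E$ is an effective $(-2)$-class with $C_0^2=-2$ and $(L-E)\cdot C_0=3$.

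The point of this class is the identity $L=2(L-E)-C_0$. The class $L-E$ is nef and big with $(L-E)^2=8$, so $\varphi_{|L-E|}$ maps $S$ onto a surface of degree $8$ in $\mathbb{P}^5$, under which $C_0$, being rational of degree $3$, goes to a (possibly degenerate) rational normal cubic $\Gamma$. Re-embedding by $|L|=|2(L-E)-C_0|$ is then precisely the embedding by the quadrics through $\Gamma$, which by construction lands $S$ in $\hat G_2$ when $\Gamma$ is a twisted cubic and in the corresponding degeneration when $\Gamma$ degenerates. The type of $\Gamma$ is governed by how $C_0$ splits into effective $(-2)$-curves: irreducible gives the twisted cubic and $\hat G_2$; a conic-plus-line splitting gives the tangential (reducible-cubic) model realised as a section of $G(5,V)$; and the two splittings of $C_0$ into three lines give the two toric degenerations, the incidence pattern of the lines matching the $3$ and $4$ singular strata recorded above.

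It remains to show that the hypothesis of a \emph{unique} $g^1_5$ forces exactly these four configurations. Solving $F^2=0$, $F\cdot L=5$ inside $\langle L,E\rangle$ shows $E$ is the only such class there, so on the generic member $\operatorname{Pic}(S)=\langle L,E\rangle$ the pencil $|E|$ is the single $g^1_5$ and $C_0$ is irreducible. Each further splitting of $C_0$ requires additional $(-2)$-classes; I would classify the overlattices generated by $\langle L,E\rangle$ together with such classes, discard those producing a second class $F$ with $F^2=0$, $F\cdot L=5$ (hence a second $g^1_5$), and check that the four surviving lattices are exactly the ones realised by the four degenerations. Combined with the earlier fact that a smooth section of $G_2$ carries no $g^1_5$ at all, this leaves the four degenerations as the only possibilities and identifies $(S,L)$ as a proper linear section of one of them.

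The main obstacle is this last lattice enumeration: one must control, uniformly over all admissible overlattices, both the appearance of spurious elliptic classes $F$ with $F\cdot L=5$ and the exact incidence of the $(-2)$-curves into which $C_0$ decomposes, since it is precisely this incidence that selects among the four models. Once the dictionary between $(-2)$-curve configurations and degenerations of $\Gamma$ is established, the geometric realisation via $\varphi_{|L-E|}$ and the quadrics through $\Gamma$ is a direct verification.
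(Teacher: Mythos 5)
Your geometric core coincides with the paper's: extract the elliptic pencil $E$, map by $|L-E|$ onto a degree-$8$ surface in $\mathbb{P}^5$, note that $L-2E$ goes to a cubic $\Gamma$, and re-embed by the quadrics through $\Gamma$, the decomposition type of $\Gamma$ selecting which of the four degenerations you land in. Where your proposal genuinely diverges -- and breaks -- is in how the uniqueness hypothesis is used. In the paper this hypothesis does two jobs. First, exactly one $g^1_5$ excludes a $g^1_4$ and a $g^2_7$ (a $g^1_4$ plus a base point, or projection of a $g^2_7$ from a singular point of its plane image, would each produce further $g^1_5$'s); this exclusion is needed \emph{before} your first step, since Green--Lazarsfeld/Donagi--Morrison alone does not guarantee that the $g^1_5$ is cut out by a class with $E^2=0$, $E\cdot L=5$. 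Second, and crucially, uniqueness rules out the case that $|L-E|$ is trigonal, i.e.\ that $\varphi_{|L-E|}(S)$ lies on a cubic threefold scroll rather than being a complete intersection of three quadrics. In the trigonal case the composition with the quadrics through $\Gamma$ does \emph{not} exhibit $(S,L)$ as a proper linear section of any of the four degenerations (the paper's outcome there is a divisor in the blow-up of a cubic scroll along a cubic curve), and it is exactly Remark \ref{trigonal} -- trigonality of $|L-E|$ forces a second $g^1_5$, via an elliptic class $E'$ with either $L\cdot E'=6$, $E\cdot E'=3$ or $L\cdot E'=5$, $E\cdot E'=2$ -- that lets the uniqueness hypothesis kill this case, after which Proposition \ref{trig} applies verbatim. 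Your proposal never mentions this alternative, yet it is the entire content of the proposition given the earlier results.

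This omission also invalidates your lattice-enumeration plan as stated. You propose to discard overlattices of $\langle L,E\rangle$ containing a second class $F$ with $F^2=0$, $F\cdot L=5$; but in the trigonal case with $L\cdot E'=6$, $E\cdot E'=3$ the extra $g^1_5$ does not come from such a class (it arises through the induced $g^2_7$), and a $g^1_4$-class $F$ with $F^2=0$, $F\cdot L=4$ likewise produces infinitely many $g^1_5$'s without meeting your criterion. So lattices would survive your sieve on which the hypothesis fails, or on which the conclusion fails (the scroll case), and your final check ``the four surviving lattices are exactly the ones realised by the four degenerations'' cannot succeed: the admissible overlattices form an unbounded family of arbitrary rank, and the four degenerations are not characterized by four lattices -- sections of $\hat{G}_2$ itself occur in all Picard ranks $\geq 2$, the degeneration type being governed by the splitting of $L-2E$ into effective classes, not by the isomorphism type of $\operatorname{Pic}(S)$. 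The paper avoids the enumeration entirely: it proves once (Proposition \ref{trig}) that no $g^1_4$ plus non-trigonality of $|L-E|$ yields the four cases, and then observes that uniqueness of the $g^1_5$ excludes $g^1_4$, $g^2_7$ and trigonality simultaneously. Replacing your enumeration by that two-line reduction closes your argument; as written, it has a genuine gap.
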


\begin{prop}\label{Pic 2}
 If $(S,L)$ is a polarized K3 surface of genus 10 such that $L$ admits a
$g^1_5$ induced by an elliptic curve, and $S$ has Picard number 2, then $(S,L)$ is a proper linear
 section of $\hat{G}_2$.
\end{prop}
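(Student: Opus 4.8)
The plan is to reconstruct, from the abstract K3 surface $(S,L)$, the model of $\mathbb{P}^5$ together with the twisted cubic whose quadrics define $\hat{G}_2$. I would begin with the lattice $\operatorname{Pic}(S)$. Since $|E|$ is an elliptic pencil its class is primitive, and as the Picard number is $2$ the lattice $\operatorname{Pic}(S)$ contains $\langle L,E\rangle$ with Gram matrix $\left(\begin{smallmatrix} 18 & 5 \\ 5 & 0\end{smallmatrix}\right)$ of discriminant $-25$. Any proper even overlattice would have index $5$ and discriminant $-1$, hence be isomorphic to the hyperbolic plane $U$; a direct check shows $U$ contains no pair of primitive classes with these intersection numbers, so in fact $\operatorname{Pic}(S)=\langle L,E\rangle$. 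I then change basis, setting $h=L-E$ and $C=L-2E$, so that $h^2=8$, $h\cdot C=3$, $C^2=-2$, and reciprocally $L=2h-C$, $E=h-C$ and $2h=L+C$. The aim is to show that $h$ embeds $S$ in $\mathbb{P}^5$ with $C$ mapping to a twisted cubic, and that the three quadrics cutting out $S$ all contain this cubic.

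Next I would establish very ampleness of $h$ by Saint-Donat's criterion, reducing it to a finite check inside the rank-$2$ lattice. Solving $4a^2+3ab-b^2=-1$ shows that $\pm C$ are the only $(-2)$-classes, and since $h\cdot C=3>0$ the class $h$ is nef and ample. Solving $4a^2+3ab-b^2=0$ shows that the only primitive isotropic classes are $E=h-C$ and $h+4C$, with $h\cdot E=5$ and $h\cdot(h+4C)=20$; in particular no elliptic class $E'$ satisfies $h\cdot E'\le 2$, and none satisfies $h\cdot E'=3$. By Saint-Donat this makes $|h|$ very ample and projectively normal, embedding $S$ as a degree-$8$ surface in $\mathbb{P}^5$ whose hyperplane sections are non-trigonal canonical curves of genus $5$; consequently $S$ is a complete intersection of three quadrics $Q_1,Q_2,Q_3$.

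I would then identify $C$ with a twisted cubic. The absence of further $(-2)$-classes forbids any decomposition of $C$ into effective classes, so $C$ is an irreducible smooth rational curve; it has degree $h\cdot C=3$, and from $h^0(S,h-C)=h^0(S,E)=2$ one sees that the hyperplanes of $\mathbb{P}^5$ through $C$ form a pencil, so $C$ spans a $\mathbb{P}^3$. A smooth irreducible cubic spanning $\mathbb{P}^3$ is a twisted cubic --- this is exactly where the hypothesis that the $g^1_5$ comes from an honest elliptic pencil is used, as it is what keeps the cubic smooth rather than degenerate (the degenerate cubics correspond to the other degenerations of $G_2$). Since $C\subset S$, every quadric through $S$ contains $C$, so $Q_1,Q_2,Q_3\in H^0(\mathbb{P}^5,I_C(2))$.

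Finally I would match this with the model of $\hat{G}_2$. By the structural result recalled in the introduction, the map $\phi\colon\mathbb{P}^5\dashrightarrow\mathbb{P}^{13}$ defined by the $14$-dimensional system $|I_C(2)|$ of quadrics through $C$ has image $\hat{G}_2$, and the three coordinates $Q_1,Q_2,Q_3$ cut out hyperplanes $H_1,H_2,H_3$. Then $\phi^{-1}(\hat{G}_2\cap H_1\cap H_2\cap H_3)=Q_1\cap Q_2\cap Q_3=S$, while on $S$ the restriction of $\phi$ is given by the sections of $2h=L+C$ divisible by the fixed section cutting out $C$, i.e.\ by the complete system $|L|$; since $L$ is very ample this realizes $\phi|_S$ as the genus-$10$ embedding of $(S,L)$. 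Comparing degrees, $\deg\phi(S)=L^2=18=\deg\hat{G}_2$, shows that $\phi(S)$ fills out the whole two-dimensional linear section, so $(S,L)\cong\hat{G}_2\cap H_1\cap H_2\cap H_3$; as $S$ is smooth this is a proper linear section meeting the singular plane of $\hat{G}_2$ properly. The main obstacle is the middle part --- proving that $h$ is very ample with the correct non-special behaviour and that $C$ is a genuine smooth twisted cubic rather than a degenerate cubic --- since this is precisely what singles out $\hat{G}_2$ among the four degenerations and relies on the careful lattice exclusions above.
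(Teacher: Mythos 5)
Your proof is correct, and its geometric core --- map $S$ to $\mathbb{P}^5$ by $h=L-E$ as a complete intersection of three quadrics, show $C=L-2E$ maps to a twisted cubic, and compose with the quadrics-through-$C$ map of Proposition \ref{maptoP5} to recover the embedding by $|L|=|2h-C|$ into $\hat{G}_2$ --- is exactly the route the paper takes in Section 6. The difference is in how the route is executed. The paper's actual proof of Proposition \ref{Pic 2} is a two-line reduction: it invokes Proposition \ref{trig} (whose hypotheses are ``no $g^1_4$'' and ``$|L-E|$ not trigonal'') and then rules out the three degenerate cubics by observing their components would raise the Picard number. This leaves implicit precisely the verifications you carry out: that Picard rank $2$ forces $\operatorname{Pic}(S)=\langle L,E\rangle$ (your overlattice/discriminant argument), that the only $(-2)$-classes are $\pm C$ and the only primitive isotropic classes are $E$ and $h+4C$, hence no $g^1_4$, no trigonality of $|h|$ (via Remark \ref{trigonal}, both trigonal cases need an elliptic $E'$ with $h\cdot E'=3$), and irreducibility of $C$. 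Conversely, where the paper's Section 6 machinery proves base-point-freeness and non-hyperellipticity of $|L-E|$ by moving/fixed-part and Riemann--Roch arguments valid at any Picard number, you get these from Saint-Donat plus the explicit lattice exclusions, which only works under the rank-$2$ hypothesis but makes the proof self-contained. So: the paper's version buys generality and brevity (given the earlier machinery); yours buys completeness, and in fact supplies details the paper's terse proof genuinely needs. One small caveat on your final step: the degree comparison $\deg\phi(S)=18=\deg\hat{G}_2$ concludes only if you know the linear section $\hat{G}_2\cap H_1\cap H_2\cap H_3$ is pure of dimension $2$ (no excess components in the exceptional divisor $D$ or the distinguished plane); this follows from smoothness of $S$ (the tangent hyperplanes of $Q_1,Q_2,Q_3$ are independent along $C$, so the proper transforms meet the exceptional divisor of $\mathrm{Bl}_C\mathbb{P}^5$ properly), and is at the same level of rigor the paper itself adopts.
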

The methods used throughout the paper are elementary and rely highly on direct
computations in chosen coordinates including the use of Macaulay2 and Magma.
\section{The variety $G_2$}
In this section we recall a basic description of the variety $G_2$ using
equations.

\begin{lemm}
The variety $G_2$ appears as a five dimensional section of the Grassmannian
$G(2,7)$
with seven hyperplanes (non complete intersection). It parametrizes the set of
2-forms
$\{\left[v_1\wedge v_2\right] \in G(2,V)\ |\  v_1\wedge v_2\wedge \omega=0\in
\bigwedge^6V\}$,
 where $V$ is a seven dimensional vector space and $\omega$ a non-degenerate
four-form on it.
\end{lemm}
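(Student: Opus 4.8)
The plan is to read off the seven hyperplanes directly from the wedge map, to reduce to a single well-chosen four-form by homogeneity, and then to identify the resulting locus with the adjoint variety via the $\mathbb{G}_2$-structure it carries.

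First I would observe that wedging with $\omega$ defines a linear map $\wedge\omega\colon \bigwedge^2 V \to \bigwedge^6 V$, $\eta \mapsto \eta\wedge\omega$. Since $\dim\bigwedge^6 V = \binom{7}{6} = 7$, choosing a basis of $\bigwedge^6 V$ expresses this map as seven linear forms in the Pl\"ucker-type coordinates on $\bigwedge^2 V$. Hence the set $\{[\eta]\in G(2,V) : \eta\wedge\omega = 0\}$ is precisely the intersection of $G(2,V)\subset\mathbb{P}(\bigwedge^2 V)$ with the seven hyperplanes cut out by these forms, that is, with $\mathbb{P}(K)$ where $K=\ker(\wedge\omega)$. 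Because all non-degenerate four-forms lie in a single open $\operatorname{GL}(7)$-orbit (as recalled in the introduction), it suffices to check the remaining assertions for one explicit representative; the $\operatorname{GL}(7)$-equivariance of $\eta\mapsto\eta\wedge\omega$ then transports them to every non-degenerate $\omega$. For the standard representative I would take $\omega$ to be the $\mathbb{G}_2$-invariant four-form on $V=\mathbb{C}^7$ and verify, in coordinates (e.g. with Macaulay2), that the seven linear forms are independent; this gives $\dim K = 21 - 7 = 14$ and shows that the seven hyperplanes are genuinely distinct.

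The conceptual core is the identification of $Z := G(2,V)\cap\mathbb{P}(K)$ with $G_2$. The $\mathbb{G}_2$-structure equips $V$ with an invariant metric, so $\bigwedge^2 V \cong \mathfrak{so}(7)$, and I would show that $K$ is carried to the subalgebra $\mathfrak{g}_2\subset\mathfrak{so}(7)$: indeed $\dim K = 14 = \dim\mathfrak{g}_2$ and $K$ is visibly $\mathbb{G}_2$-invariant, so matching the two copies of the adjoint representation inside $\bigwedge^2 V$ identifies them. Under this identification $Z$ is the locus of decomposable (rank-two) elements of $\mathfrak{g}_2$, which is exactly the closed $\mathbb{G}_2$-orbit of highest root vectors, i.e. the adjoint variety $G_2\subset\mathbb{P}(\mathfrak{g}_2)=\mathbb{P}^{13}$. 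Alternatively, staying concrete, I would check that $Z$ is $\mathbb{G}_2$-invariant, irreducible, of dimension $5$, and that $\mathbb{G}_2$ acts transitively on it (the stabilizer of a point being a parabolic subgroup of the correct dimension), which characterizes it as the unique closed orbit and hence as $G_2$.

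Finally, the non-complete-intersection statement is a dimension count: $\dim G(2,7)=10$, so seven hyperplanes would cut out a variety of expected dimension $10-7=3$, whereas $\dim Z = \dim G_2 = 5$. The excess of $2$ shows that the seven hyperplanes fail to meet $G(2,7)$ properly. I expect the main obstacle to be the identification of $Z$ with the adjoint variety described above — precisely matching the abstract closed orbit with the explicit decomposable locus in $K$ — since this is where the $\mathbb{G}_2$ representation theory (or an orbit/transitivity computation) must be brought to bear; the hyperplane count and the dimension bookkeeping are routine by comparison.
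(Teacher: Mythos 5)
Your proposal is correct in outline, but it takes a genuinely different route from the paper, which in fact offers no proof of this lemma at all: Section 2 is explicitly a recollection (``we recall a basic description of the variety $G_2$ using equations''), resting on the citation to Abo--Ottaviani--Peterson for the normal form of $\omega$ and substantiated only by the explicit $7\times 7$ skew-symmetric matrix whose $4\times 4$ Pfaffians cut out $G_2$ in its linear span $\mathbb{P}^{13}$. Your argument supplies the conceptual proof behind that recollection: the seven hyperplanes as the components of the linear map $\wedge\omega\colon\bigwedge^2V\to\bigwedge^6V$, reduction to one representative by $\operatorname{GL}(7)$-homogeneity, and identification of the kernel with $\mathfrak{g}_2\subset\mathfrak{so}(7)$. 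Two observations on your version. First, the Macaulay2 check of independence of the seven forms is unnecessary: $\wedge\omega$ is $\mathbb{G}_2$-equivariant and nonzero, and $\bigwedge^2V\cong\mathfrak{so}(7)=\mathfrak{g}_2\oplus V$ with both summands irreducible, so the kernel is forced to be exactly the $14$-dimensional summand $\mathfrak{g}_2$. Second, the step you flag as the main obstacle --- that the decomposable elements of $\mathbb{P}(\mathfrak{g}_2)$ are precisely the closed orbit --- is indeed the entire classical content being recalled, and in your sketch it is asserted rather than proved; a clean way to close it is to note that $Z=G(2,V)\cap\mathbb{P}(\mathfrak{g}_2)$ is a closed $\mathbb{G}_2$-invariant subvariety containing the adjoint variety, that the only $\mathbb{G}_2$-orbit in $\mathbb{P}(\mathfrak{g}_2)$ of dimension at most $5$ is the closed one (projectivized nilpotent orbits have dimensions $5,7,9,11$ and semisimple or mixed orbits dimension at least $9$), and that $\dim Z=5$, whence $Z$ coincides with the adjoint variety. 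What your approach buys is a computer-free, representation-theoretic explanation of the dimension excess and of why the section is not a complete intersection; what the paper's approach buys is the explicit Pfaffian matrix in coordinates, which is what every subsequent construction in the paper (notably the degeneration to $\hat{G}_2$, obtained by perturbing entries of that matrix) actually uses.
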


By \cite{ott} we can choose $\omega= x_1\wedge x_2 \wedge x_3 \wedge
x_7+x_4\wedge x_5
\wedge x_6 \wedge x_7 + x_2\wedge x_3 \wedge x_5 \wedge x_6 + x_1\wedge x_3
\wedge x_4
\wedge x_6+x_1\wedge x_2 \wedge x_4 \wedge x_5$. The variety $G_2$ is then
described in its linear span $W\subset\mathbb{P}(\bigwedge^2V)$ with coordinates $(a\dots n)$  by
$4\times 4$ Pfaffians of the matrix:

$$\left(\begin{array}{ccccccc}
0&-f&e&g&h&i&a\\
f&0&-d&j&k&l&b\\
-e&d&0&m&n&-g-k&c\\
-g&-j&-m&0&c&-b&d\\
-h&-k&-n&-c&0&a&e\\
-i&-l&g+k&b&-a&0&f\\
-a&-b&-c&-d&-e&-f&0
  \end{array}\right).
$$

\section{The variety $\hat{G}_2$}

>From \cite{ott} there is a hypersurface of degree 7 in $\bigwedge^4 V$
parameterizing four-forms which may be written as a sum of three pure forms.
The generic element of this hypersurface corresponds to a generic degenerate
four-form $\omega_0$. After a suitable change of coordinates we may assume (see \cite{ott})
that $\omega_0= x_1\wedge x_2 \wedge x_3 \wedge x_7+x_4\wedge x_5 \wedge x_6
\wedge x_7 + x_2\wedge x_3 \wedge x_5 \wedge x_6 + x_1\wedge x_3 \wedge x_4
\wedge x_6$. Let us consider the variety $\hat{G}_2=\{\left[v_1\wedge v_2\right]
\in G(2,V) \ |\  v_1\wedge v_2\wedge \omega_0=0\in \bigwedge^6V\}$.
Analogously as in the non-degenerate case it is described in it's linear span by
$4\times 4$ Pfaffians of a matrix of the form

$$\left(\begin{array}{ccccccc}
0&0&e&g&h&i&a\\
0&0&-d&j&-g&l&b\\
-e&d&0&m&n&k&c\\
-g&-j&-m&0&0&-b&d\\
-h&g&-n&0&0&a&e\\
-i&-l&k&b&-a&0&f\\
-a&-b&-c&-d&-e&-f&0
  \end{array}\right).
$$

Directly from the equations we observe that $\hat{G}_2$ contains a smooth Fano
fourfold $F$ described in the space
$(b,c,d,f,j,l,m,k)$ by the $4\times 4$ Pfaffians of the matrix
 $$\left(\begin{array}{ccccccc}
0&-d&j&l&b\\
d&0&m&k&c\\
-j&-m&0&-b&d\\
-l&-k&b&0&f\\
-b&-c&-d&-f&0
  \end{array}\right)
$$

\begin{rem}\label{fana}
 In fact we see directly a second such Fano fourfold $F'$ isomorphic to $F$ and meeting $F$ in a plane.
It is analogously the intersection of $\hat{G}_2$ with the space $(e,h,i,a,n,c,k,f)$.
We shall see that there is in fact a one parameter family of such Fano fourfolds any two
intersecting in the plane $(c,k,f)$.
\end{rem}
\begin{obs}\label{rzut na P1xP5}
The image of the projection of $\hat{G}_2$ from the plane spanned by $(c,k,f)$
is a hyperplane section of $\mathbb{P}^1\times\mathbb{P}^5$.
\end{obs}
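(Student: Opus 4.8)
The plan is to make the projection completely explicit and identify its image with the rank-one locus of a $2\times 6$ matrix. Since $\hat G_2$ lies in the $\mathbb{P}^{13}$ with coordinates $(a,\dots ,n)$, projection from the plane $(c,k,f)$ simply forgets these three coordinates and lands in the $\mathbb{P}^{10}$ with coordinates $(a,b,d,e,g,h,i,j,l,m,n)$; its image is the variety cut out by the elimination ideal $I(\hat G_2)\cap k[a,b,d,e,g,h,i,j,l,m,n]$. The guiding expectation comes from Remark~\ref{fana}: the two Fano fourfolds $F,F'$ lie in the coordinate subspaces $a=e=g=h=i=n=0$ and $b=d=g=j=l=m=0$, so they project to two $\mathbb{P}^4$'s, and the whole one-parameter family of such fourfolds should sweep out a $(1,1)$-divisor. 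This is precisely what a hyperplane section of $\mathbb{P}^1\times\mathbb{P}^5$ looks like, namely $\operatorname{Bl}_{\mathbb{P}^3}\mathbb{P}^5$.

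Concretely, I would check that the image lies on the rank $\le 1$ locus $\Sigma$ of
\[
M=\begin{pmatrix} g & -j & d & -l & m & b \\ h & g & e & i & n & -a\end{pmatrix}.
\]
Most $2\times2$ minors of $M$ are already single $4\times4$ Pfaffians not involving $c,k,f$: for instance $g^2+hj$, $ge-hd$ and $ij-gl$ arise from the index sets $\{1,2,4,5\}$, $\{1,4,5,7\}$ and $\{1,2,4,6\}$. The remaining ``mixed'' minors, such as $di+le$, are not individual Pfaffians but are obtained by combining two Pfaffians so as to cancel $c,k,f$; thus $di+le$ comes from the Pfaffians on $\{1,4,6,7\}$ and $\{2,5,6,7\}$, in which the $f$- and $ab$-terms cancel. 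Verifying that these eleven-variable quadrics are exactly the $2\times2$ minors of $M$, and that they generate the entire elimination ideal, is the computational heart of the argument and is where I would invoke Macaulay2.

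Once the inclusion $\pi(\hat G_2)\subseteq\Sigma$ is in hand, the identification with a hyperplane section is formal. A $2\times6$ matrix of $12$ independent linear forms has rank $\le 1$ locus equal to the Segre variety $\mathbb{P}^1\times\mathbb{P}^5\subset\mathbb{P}^{11}$, and in $M$ the single coincidence of the two diagonal entries (both equal to $g$) is exactly the restriction to the hyperplane $\{z_{00}=z_{11}\}$. Since the corresponding bilinear form has rank $2$, the section $\Sigma$ is irreducible of dimension $5$.

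It remains to upgrade the inclusion to an equality, i.e. to show that $\pi$ is generically finite onto $\Sigma$; as both are irreducible fivefolds this forces $\pi(\hat G_2)=\Sigma$. For this I would exhibit the inverse rational map explicitly: on the open set $g\neq 0$ the Pfaffians on $\{1,3,4,6\}$, $\{1,3,4,7\}$ and $\{1,4,6,7\}$ solve linearly for
\[
k=\frac{im-eb}{g},\qquad c=\frac{ed+am}{g},\qquad f=\frac{id+ab}{g},
\]
so a general point of $\Sigma$ has a unique preimage and $\pi$ is birational onto $\Sigma$. The main obstacle is the middle step: producing the correct matrix $M$ together with the Pfaffian combinations that realize its mixed minors, and certifying that these generate the full elimination ideal rather than a proper subideal.
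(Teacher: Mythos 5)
Your proposal is correct and follows essentially the same route as the paper: the paper's proof likewise observes that the quadrics of $\hat G_2$ not involving $c,k,f$ are the $2\times 2$ minors of a $2\times 6$ matrix of linear forms (your $M$ is just a column permutation, with sign changes, of the paper's matrix, the repeated entry $g$ giving the hyperplane condition), and then concludes by checking the projection is an isomorphism over the open set $g\neq 0$ of that hyperplane section, exactly as your explicit formulas $k=(im-eb)/g$, $c=(ed+am)/g$, $f=(id+ab)/g$ do. Note only that for the set-theoretic statement you do not need the minors to generate the full elimination ideal; your inclusion-plus-birationality argument already suffices, as in the paper.
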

\begin{proof} The projection maps $\hat{G}_2$ to $\mathbb{P}^{10}$ with
coordinates $(a,b,d,e,g,h,i,j,l,m,n)$
Observe that the equations of the projection involve $2\times 2$ minors of the
matrix
$$\left(\begin{array}[c]{cccccc}
 e&g&h&i&a&-n\\
-d&j&-g&l&b&m
\end{array}\right),$$
as these equations appear in the description of $\hat{G}_2$ and do not involve
$c,k,f$.
It follows that the image is contained in a hyperplane section $P$ of a
$\mathbb{P}^1\times\mathbb{P}^5$.
Next we check that the map is an isomorphism over the open subset given by $g=1$
of $P$.
\end{proof}

\begin{prop} \label{hilbP3} The Hilbert scheme of projective 3-spaces contained
in $\hat{G}_2$ is a conic.
Moreover the union of these 3-spaces is a divisor $D$ of degree $8$ in
$\hat{G}_2$.
\end{prop}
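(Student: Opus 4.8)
The plan is to exploit the inclusion $\hat{G}_2\subset G(2,V)$ together with the classification of linear subspaces of a Grassmannian. Any $\mathbb{P}^3$ contained in $\hat{G}_2$ is in particular a linear subspace of $G(2,V)$. The maximal linear subspaces of $G(2,V)$ are the $\mathbb{P}^5$'s of the form $\sigma_p=\{[\ell]\ :\ p\in\ell\}$ and the planes $\{[\ell]\ :\ \ell\subset\Pi\}$ for $\Pi$ a $3$-space, and the latter are only $\mathbb{P}^2$'s; hence every $\mathbb{P}^3\subset\hat{G}_2$ lies in a (unique) $\sigma_p$, i.e. it is of the form $\mathbb{P}(U/\langle p\rangle)$ for a flag $\langle p\rangle\subset U\subset V$ with $\dim U=5$, consisting of the lines $\langle p,w\rangle$, $w\in U$.

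First I would rewrite the isotropy condition for such a family. Writing $\eta_0\in\bigwedge^3V^*$ for the $3$-form associated to $\omega_0$ via $\bigwedge^6V\cong V^*$ (equivalently $\bigwedge^4V\cong\bigwedge^3V^*$), the requirement $p\wedge w\wedge\omega_0=0$ for all $w\in U$ becomes $U\subseteq K_p:=\ker\beta_p$, where $\beta_p:=\iota_p\eta_0\in\bigwedge^2V^*$; indeed $\langle p,w\rangle\in\hat{G}_2$ iff $\beta_p(w,\cdot)=0$. As $\beta_p$ is a $2$-form with $p\in K_p$ always, a suitable $U$ exists iff $\dim K_p\ge 5$, i.e. iff $\operatorname{rank}\beta_p\le 2$, i.e. iff $\beta_p\wedge\beta_p=0$. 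This is a system of quadrics in the coordinates of $p\in\mathbb{P}(V)$, which I would solve directly in the fixed coordinates: the equations force four of the seven coordinates of $p$ to vanish and leave a single smooth quadric in the remaining $\mathbb{P}^2$, so the locus $Z$ of admissible $p$ is a smooth conic. Along $Z$ the rank is identically $2$ (it never drops to $0$), so $\dim K_p=5$ and $U=K_p$ is forced; thus $p\mapsto\mathbb{P}(K_p/\langle p\rangle)$ identifies the Hilbert scheme with $Z$. One should additionally confirm this identification is scheme-theoretic (the family reduced), e.g. by a tangent-space/normal-bundle count.

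For the divisor $D$ I would present the union as the image of the incidence variety $\mathcal{I}=\mathbb{P}(E)\to Z$, where $E\subset\bigwedge^2V\otimes\mathcal{O}_Z$ is the rank-$4$ bundle with fibre $E_p=p\wedge K_p$, mapped into $\mathbb{P}(\bigwedge^2V)$ by the tautological $\mathcal{O}(1)$. Parametrizing $Z$ by $[s:t]$ one finds $K_p\cong\mathcal{O}^3\oplus\mathcal{O}(-1)^2$, and after accounting for the degree-$2$ dependence of $p$ one gets $\deg E=-8$. Since a general point of $D$ lies on a unique member of the family — the point $p$ is recovered as the intersection of the line $\langle p,w\rangle$ with the plane carrying $Z$ — the map $\mathcal{I}\to D$ is birational, so $\dim D=4$ and $\deg D=-\deg E=8$; in particular $D$ is a divisor in the fivefold $\hat{G}_2$.

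The main obstacle is the computation for $D$: the reduction to the $\sigma_p$ and the rank-$\le 2$ locus are routine once set up, but pinning down the bundle $E$ exactly — checking that the three constant and two moving directions of $K_p$ assemble into $\mathcal{O}^3\oplus\mathcal{O}(-1)^2$ and tracking the twist coming from $p$ — requires care. Alternatively, both $\deg D=8$ and the reducedness of the Hilbert scheme can be verified by a direct Macaulay2 computation of the ideal of $D$ and its Hilbert polynomial.
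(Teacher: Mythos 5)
Your argument is correct and reaches both conclusions, but by a genuinely different route than the paper, most notably for the degree. The first step is shared: since the $\beta$-planes of $G(2,V)$ are only $\mathbb{P}^2$'s, every $\mathbb{P}^3\subset\hat G_2$ consists of lines through a fixed point $p$ (the paper phrases this as: any such $\mathbb{P}^3$ is spanned by $u\wedge v_1,\dots,u\wedge v_4$). For the conic, the paper then decomposes $\omega_0$ into three simple forms with $4$-spaces $P_1,P_2,P_3$, uses a secant-plane lemma for $G(5,V)$ (via Mukai) to force $p$ into the plane $\operatorname{span}\{P_i\cap P_j\}$, and only then writes the quadratic condition $((p\wedge\omega_0)^*)^2=0$; you go straight at the rank condition $\beta_p\wedge\beta_p=0$ with $\beta_p=\iota_p\eta_0$. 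Since $\beta_p=(p\wedge\omega_0)^*$, these are literally the same equations, and your predicted outcome is right: writing $\omega_0=e_1\wedge e_2\wedge a_1\wedge a_2+e_1\wedge e_3\wedge b_1\wedge b_2+e_2\wedge e_3\wedge c_1\wedge c_2$ one gets $p\wedge\omega_0=e_1\wedge e_2\wedge e_3\wedge\psi_p$ with $\psi_p$ linear in $p$, and $\psi_p\wedge\psi_p=0$ cuts a smooth conic in the plane $\langle e_1,e_2,e_3\rangle$, so your route simply trades the paper's geometric reduction for direct elimination. The real divergence is the degree of $D$: the paper identifies $D=\hat G_2\cap\{g=h=j=0\}$ in coordinates and computes $\deg D=8$ by Macaulay2, while you get $\deg D=-\deg E=8$ from the projective bundle $\mathbb{P}(E)\to Z$, $E_p=p\wedge K_p$, plus birationality of $\mathbb{P}(E)\to D$. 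Your numbers do check out: each $K_p$ contains the fixed plane of the conic (this is where the paper's separate remark that secant lines of $Q$ lie in $\hat G_2$ reappears), the moving rank-$2$ part is the pullback of the tautological subbundle along a conic in $G(2,V/\langle e_1,e_2,e_3\rangle)$ and has degree $-2$, and $\langle p\rangle\cong\mathcal{O}(-2)$, giving $\deg E=\deg K+3\deg\langle p\rangle=-8$; your recovery of $p$ as $\langle p,w\rangle\cap\operatorname{span}(Z)$ does give birationality. What each approach buys: yours is machine-free and explains structurally where $8$ comes from (exhibiting $D$ as birationally a $\mathbb{P}^3$-bundle over $\mathbb{P}^1$), whereas the paper's yields the explicit coordinate description of $D$, which it reuses later (e.g.\ in the singularity analysis of $\hat G_2$). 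Note finally that your two deferred checks (the exact splitting of $K$, which is in fact irrelevant since only $\deg K=-2$ matters, and the reduced scheme structure of the Hilbert scheme) are of the same nature as the paper's own reliance on Macaulay2, so neither proof is fully self-contained on this point.
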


\begin{proof}
We start by proving the following lemmas.
\begin{lemm}\label{3secantplanesg27}
  Let a plane $P$ have four points of intersection with a $G(5,V)$, such that
they
  span this plane. Then $P\cap G(5,V)$ is
  a conic parameterizing all five-spaces containing a three-space $W$.
  \end{lemm}
  \begin{proof} The proof follows from \cite{Muk2}, as three points in $G(2,V)$
  always lie in a $G(2,A)$ for some subspace $A$ of dimension 6.
  \end{proof}

\begin{lemm}\label{lemhilbP3}
A projective three-space $\Pi\subset G(2,7)$ is contained in $\hat{G}$ if and
only if there exists a vector $u$ in $V$ and a four-space
$v_1\wedge v_2\wedge v_3\wedge v_4 \in G(4,7)$ such that $u\wedge
\omega_0=u\wedge v_1\wedge v_2\wedge v_3\wedge v_4 $ and $\Pi$ is generated
by $u\wedge v_1$, $u\wedge v_2$, $u\wedge v_3$, $u\wedge v_4$.
\end{lemm}
\begin{proof} To prove the if part we observe that our
conditions imply $u\wedge v_i\wedge \omega_0=0$ for $i=1,\dots,4$.
Let us pass to the proof of the only if part.
Observe first that any projective three-space contained in $G(2,V)$ is spanned
by four points of the form
$u\wedge v_1$,$u\wedge v_2$,$u\wedge v_3$,$u\wedge v_4$. By our assumption on
$\omega_0$ the form $u\wedge \omega_0 \neq 0$,
and it is killed by the vectors $u,v_1,\dots,v_4$, hence equals $u\wedge
v_1\wedge v_2\wedge v_3\wedge v_4$.
\end{proof}
Now, it follows from Lemma \ref{lemhilbP3} that the set of projective three-spaces
contained in
$\hat{G}$ is parametrized by those $\left[v\right]\in\mathbb{P}(V)$
for which $v\wedge \omega_0 \in G(5,7)$. The form $\omega_0$ may be written as
the sum of three simple forms corresponding to three subspaces
$P_1$, $P_2$, $P_3$ of dimension 4 in $V$, each two meeting in a line and no
three having a nontrivial intersection. Hence the form
$v\wedge \omega_0$ may be written as the sum of three simple forms corresponding
to three subspaces of dimension 5 each spanned by $v$ and one
of the spaces $P_i$. By lemma \ref{3secantplanesg27} the sum of these three
5-forms may be a simple form only if they all contain a common 3-space.
But this may happen only if $v$ lies in the space spanned by the lines $P_i\cap
P_j$. Now it is enough to see that the condition $v\wedge \omega_0$
is simple, corresponds for the chosen coordinate space to $((v\wedge
\omega_0)^*)^2=0$ and perform a straightforward computation to see that it
induces a quadratic equation on the coefficients of $v\in \operatorname{span}\{P_1\cap
P_2,\ P_1\cap P_3,\ P_2\cap P_3\}$.

In coordinates the constructed divisor is the intersection of $\hat{G}_2$ with
$\{g=h=j=0\}$. The latter defines on $G(5,7)$ the set of lines intersecting the distinguished plane.
We compute in Macaulay2 its degree.
\end{proof}

\begin{rem}
>From the above proof it follows that the form $\omega_0$ defines
a conic $Q$ in $\mathbb{P}(V)$ by  $Q=\{ \left[v\right]\in\mathbb{P}(V)\ |\  v\wedge
\omega_0 \in G(5,7)\}$.
Observe that any secant line of this conic is an element of $\hat{G}_2$. Indeed
let $v_1,v_2\in V$ be two vectors such that
$\left[v_1\right],\ \left[v_2\right]\in Q$. Then $v_i\wedge \omega_0$ defines a 5
space $\Pi_i\subset V$ for $i=1,2$. Consider now the product
$v_1\wedge v_2 \wedge \omega_0$. If it is not zero it defines a hyperplane in
$V$. It follows that $\dim(\Pi_1\cap \Pi_2)=4$ and
$\omega_0$ can then be written in the form $\omega_0=u_1\wedge u_2 \wedge u_3
\wedge u_4 +v_1 \wedge v_2 \wedge \alpha$.
 According to \cite{ott} this decomposition corresponds to a non general
degenerate form
$\omega_0$ giving us a contradiction.

The proof implies also that each $\mathbb{P}^3$ contained in $\hat{G}_2$ is a
$\mathbb{P}^3$ of lines passing through a chosen $v$ in the conic
and contained in the projective four-space corresponding to $v\wedge \omega_0$.

\end{rem}

\begin{rem}
For any three points $v_1,v_2,v_3$ lying on the distinguished conic $Q$, there
exists a decomposition of $\omega_0$ into the sum of 3 simple forms
$\alpha_1,\alpha_2, \alpha_3$ such that
$v_1\wedge(\omega_0-\alpha_1)=v_3\wedge(\omega_0-\alpha_2)=v_3\wedge(\omega_0-
\alpha_3)=0$. In other words
for any triple of points on the conic there is a decomposition with corresponding 4-spaces
$P_1$,$P_2$,$P_3$ such that $(v_1,v_2,v_3)=(P_1\cap P_2,P_1\cap P_3,P_2\cap
P_3)$.
\end{rem}

\begin{rem}
A three form defining $\hat{G}_2$ has a five dimensional family of presentations
into the sum of three simple forms corresponding to three
subspaces $P_1$, $P_2$, $P_3$, however all these presentations induce the same
space $\operatorname{span}\{P_1\cap P_2,P_1\cap P_3,P_2\cap P_3\}$.
This space corresponds to the only projective plane in $\hat{G}$ consisting of
lines contained in a projective plane. All other planes contained
in $\hat{G}$ consist of lines passing through a point and contained in a
three-space.
\end{rem}

\begin{prop} \label{maptoP5} The projection of $\hat{G}_2$ from $F$ is a
birational map onto $\mathbb{P}^5$ whose inverse is the map $\varphi$ defined
by the system of quadrics in $\mathbb{P}^5$ containing a common twisted cubic.
\end{prop}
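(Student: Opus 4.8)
The plan is to carry out both the projection and its inverse explicitly in the given coordinates, reading off birationality and the twisted cubic directly from the Pfaffian equations. First I would pin down the centre of the projection. Since $F$ lies in the coordinate subspace with coordinates $(b,c,d,f,j,l,m,k)$ and, being a smooth linear section of $G(2,5)$, spans that $\mathbb{P}^{7}$, the linear span $\langle F\rangle$ is exactly the coordinate $\mathbb{P}^{7}=\{a=e=g=h=i=n=0\}\subset\mathbb{P}^{13}$. Hence the projection from $F$ is the rational map $\pi\colon(a:\cdots:n)\mapsto(a:e:g:h:i:n)\in\mathbb{P}^{5}$. As $\dim\hat{G}_2=5$ the statement is dimensionally consistent, so what remains is to show that $\pi|_{\hat{G}_2}$ has degree one and to identify the inverse with the quadric system.

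The key step is to construct the inverse by solving, for a general point of $\mathbb{P}^{5}$, the Pfaffians that are linear in the fibre variables $(b,c,d,f,j,l,m,k)$. Grouping the $4\times4$ Pfaffians of the displayed matrix, one finds for each fibre coordinate a single Pfaffian whose nonzero terms are only of type (base)$\cdot$(fibre) and (base)$\cdot$(base); this produces the relations $h\,b=-ag$, $h\,d=eg$, $h\,m=gn$, $h\,j=-g^{2}$, $h\,l=-gi$, $h\,c=e^{2}+an$, $h\,f=ie-a^{2}$, and $h\,k=ae+in$. Thus every fibre coordinate is a quadric in $(a,e,g,h,i,n)$ divided by the \emph{single} linear form $h$ (playing here the role of the distinguished chart used in Observation \ref{rzut na P1xP5}). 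Clearing $h$ gives a morphism off the locus $\{h=0\}$, namely the $14$ quadrics
\[
(ha:-ag:e^{2}+an:eg:he:ie-a^{2}:hg:h^{2}:hi:-g^{2}:ae+in:-gi:gn:hn).
\]
Because the six projection coordinates here equal $h\cdot(a:e:g:h:i:n)$, this map $\varphi$ satisfies $\pi\circ\varphi=\mathrm{id}_{\mathbb{P}^{5}}$; since one checks (a Macaulay2 substitution) that these quadrics satisfy all the Pfaffians, $\varphi$ maps into $\hat{G}_2$, and as its image is an irreducible fivefold inside the irreducible fivefold $\hat{G}_2$, it is all of $\hat{G}_2$. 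Hence $\pi|_{\hat{G}_2}$ is birational with inverse $\varphi$.

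It then remains to recognise $\varphi$ as the full system of quadrics through a twisted cubic. I would compute the base locus $\{q_a=\cdots=q_n=0\}$: the six quadrics $h\cdot(a,e,g,h,i,n)$ force $h=0$, then $g^{2}=0$ forces $g=0$, and what survives in $\{g=h=0\}=\mathbb{P}^{3}_{(a,e,i,n)}$ is $\{e^{2}+an=0,\ ie-a^{2}=0,\ ae+in=0\}$, parametrised by $(a:e:i:n)=(s^{2}t:st^{2}:s^{3}:-t^{3})$, i.e. a smooth twisted cubic $C$ spanning that $\mathbb{P}^{3}$. Since $\mathcal O_C(2)\cong\mathcal O_{\mathbb{P}^1}(6)$ gives $h^{0}(\mathcal I_C(2))=\binom{7}{2}-7=14$ and the $14$ quadrics above are linearly independent and all vanish on $C$, they form a basis of $H^{0}(\mathcal I_C(2))$. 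Therefore $\varphi$ is precisely the map defined by the complete linear system of quadrics containing a twisted cubic, as claimed.

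The main obstacle is the middle step: showing that the eight fibre variables can be solved \emph{uniquely} and with one common linear denominator, rather than through a generic $8\times8$ inversion (which would produce high-degree, non-quadratic entries). This rests on the special sparse structure of the Pfaffian matrix, and verifying it—together with checking that the general fibre of $\pi$ is a single reduced point off $\langle F\rangle$ (no extraneous residual component) and that the base locus is exactly the reduced twisted cubic—is the genuine computational content, best confirmed in Macaulay2 in the style already used in the paper.
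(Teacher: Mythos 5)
Your proof is correct, and it takes a genuinely different route from the paper's. The paper obtains birationality by factoring the projection through the plane $\langle c,k,f\rangle$ and the Segre variety: by Observation \ref{rzut na P1xP5} the first step maps $\hat{G}_2$, over $\{g\neq 0\}$, isomorphically onto a hyperplane section $P$ of $\mathbb{P}^1\times\mathbb{P}^5$, and the projection of $P$ onto the $\mathbb{P}^5$ factor is the blow-down of a $\mathbb{P}^1\times\mathbb{P}^3$; the inverse is then exhibited as a ready-made parametrization by quadrics through a twisted cubic, written in auxiliary coordinates $(x,y,z,t,u,v)$, and verified by substitution into the Pfaffian matrix. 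You instead derive the inverse by elimination, and your key claim holds for a clean structural reason worth making explicit: rows $1$ and $5$ of the skew matrix contain only the base coordinates $a,e,g,h,i,n$ (and zeros), while every fibre coordinate occurs as an entry $M_{ij}$ with $i,j\notin\{1,5\}$, so the Pfaffian on rows $\{1,5,i,j\}$ has the form $\pm h\,M_{ij}\pm(\text{base})\cdot(\text{base})$; this yields exactly your eight relations (I checked all eight, and they agree with the paper's parametrization up to a linear change of coordinates). Your a posteriori identification of the twisted cubic as the base locus, combined with the count $h^0(\mathcal{I}_C(2))=21-7=14$ and the linear independence of your quadrics, actually proves a point the paper leaves implicit, namely that its $14$ quadrics form the \emph{complete} system of quadrics through the cubic. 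What each approach buys: yours is self-contained (no guessed parametrization is needed) and explains structurally why the inverse is quadratic with the single denominator $h$; the paper's factorization produces geometric byproducts (the hyperplane section of $\mathbb{P}^1\times\mathbb{P}^5$, the pencil of Fano fourfolds, the contracted $\mathbb{P}^3$) that are reused later, e.g.\ in Remark \ref{pencilfano} and in the lemma on the singular locus. Finally, note that both arguments rest on the same two inputs deferred to computation or left implicit: the substitution check that the quadrics satisfy all Pfaffians, and the fact that $\hat{G}_2$ is irreducible of dimension five, which you invoke for ``dense image equals all of $\hat{G}_2$'' and which the paper equally needs for the density of $\{g\neq 0\}$ in $\hat{G}_2$.
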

\begin{proof} Observe that the considered projection from $F$ decomposes into a
projection from the plane spanned by $c,k,f$
and the canonical projection from $\mathbb{P}^1\times \mathbb{P}^5$ onto
$\mathbb{P}^5$. The latter restricted to $P$ is the blow down of
$\mathbb{P}^1\times\mathbb{P}^3$. It follows that the map is an isomorphism
between the open set given by $g=1$ and its image in $\mathbb{P}^5$.
Let us write down explicitly the inverse map. Let $(x,y,z,t,u,v)$ be a
coordinate system in $\mathbb{P}^5$. Consider a twisted cubic curve
given by $u=0$, $v=0$ and the minors of the matrix
$$\left(\begin{array}[c]{ccc}
x&y &z\\
t&x&y
\end{array}\right).
$$
Let $L$ be the system of quadrics containing the twisted cubic.
Choose the coordinates $(a,\dots,n)$ of $H^0(L)$ in the following way: $(a,\dots,n)=(uy,vy,yt-x^2,-vx,ux, y^2-xz,uv,
-u^2,uz, v^2,-x y+z t,vz,vt,ut).$
We easily check that the corresponding map is well defined and inverse to the projection by writing down
the matrix defining $\hat{G}_2$ with substituted coordinates.
$$\left(\begin{array}[c]{ccccccc}
   0&0& ux & u v & -u^2 & uz & uy\\
0&0& vx & v^2 & -uv & vz & vy\\
-ux&-vx&0 &v t&-u t& -x y+z t&yt-x^2\\
-u v &-v^2 &-v t&0&0&-v y &-vx\\
u^2&uv &u t&0&0&uy &u x\\
-uz &-vz &x y-z t&v y &-uy &0& y^2-xz\\
-uy&-vy&-yt+x^2&vx&-u x& -y^2+xz&0\\
  \end{array}
 \right)$$
\end{proof}
\begin{rem} \label{pencilfano}The images of the 4-dimensional projective spaces containing the twisted cubic form a pencil of smooth Fano fourfolds
each two meeting in the plane which is the image of the $\mathbb{P}^3$ spanned by the twisted cubic. The statement follows from
the fact that we can change coordinates in $\mathbb{P}^5$ and hence we can assume that any two chosen Fano fourfolds obtained in this way
are $F$ and $F'$ in Remark \ref{fana}.

\end{rem}

\begin{lemm}
The singular locus of $\hat{G}_2$ is a plane.
\end{lemm}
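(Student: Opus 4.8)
The plan is to show that the singular locus is exactly the plane $\Pi=(c,k,f)$, i.e.\ the image $\varphi(\mathbb{P}^3)$ of the linear span $\langle C\rangle=\mathbb{P}^3$ of the twisted cubic, as identified in Remark \ref{pencilfano}. I would prove the two inclusions $\operatorname{Sing}\hat G_2\subseteq\Pi$ and $\Pi\subseteq\operatorname{Sing}\hat G_2$ separately, using throughout the birational parametrization $\varphi\colon\mathbb{P}^5\dashrightarrow\hat G_2$ of Proposition \ref{maptoP5} and the fact, recorded in its proof and in Observation \ref{rzut na P1xP5}, that $\varphi$ restricts to an isomorphism between the chart $\{g\neq0\}\subset\hat G_2$ and an open subset of $\mathbb{P}^5$.

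For $\operatorname{Sing}\hat G_2\subseteq\Pi$, I would first note that $\hat G_2$ is smooth along $\{g\neq0\}$ by that isomorphism, and then propagate this smoothness over the rest of $\hat G_2\setminus\Pi$ by homogeneity. In the coordinates of Proposition \ref{maptoP5} one reads off $g=uv$, so this chart is the image of $\{uv\neq0\}\subset\mathbb{P}^5$. The block-diagonal automorphisms $\operatorname{diag}(I_4,B)$ with $B\in\operatorname{GL}_2$ acting on the transversal coordinates $(u,v)$ and fixing $\mathbb{P}^3=\{u=v=0\}$ pointwise preserve the twisted cubic $C$, hence the linear system $L$ of quadrics through it, and therefore descend to linear automorphisms of $\mathbb{P}^{13}=\mathbb{P}(H^0(L)^*)$ preserving $\hat G_2=\varphi(\mathbb{P}^5)$ and $\Pi=\varphi(\mathbb{P}^3)$; this is precisely the coordinate freedom invoked in Remark \ref{pencilfano}. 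Since any point of $\mathbb{P}^5\setminus\mathbb{P}^3$ has $(u,v)\neq(0,0)$, a suitable $B$ carries it into $\{uv\neq0\}$, so every point of $\hat G_2\setminus\Pi$ is moved into the smooth chart by an automorphism of $\hat G_2$. Hence $\hat G_2\setminus\Pi$ is smooth.

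For the reverse inclusion $\Pi\subseteq\operatorname{Sing}\hat G_2$, I would use the pencil of smooth Fano fourfolds $\{F_\lambda\}$ of Remark \ref{pencilfano} (compare Remark \ref{fana}), every two of which meet exactly along $\Pi$; in particular each $F_\lambda$ contains $\Pi$. Fix a general point $p\in\Pi$. Each inclusion $F_\lambda\subset\hat G_2$ gives $T_pF_\lambda\subseteq T_p\hat G_2$, and these are four-dimensional subspaces all containing the two-dimensional $T_p\Pi$. For two general members the spaces $T_pF_{\lambda_1},T_pF_{\lambda_2}$ are distinct with intersection generically equal to $T_p\Pi$, so they span a space of dimension $4+4-2=6$. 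Thus $\dim T_p\hat G_2\geq 6>5=\dim\hat G_2$, so $p$ is singular; as this holds on a dense subset of $\Pi$ and $\operatorname{Sing}\hat G_2$ is closed, we get $\Pi\subseteq\operatorname{Sing}\hat G_2$.

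The main obstacle is the transversality input in each half: in the first inclusion one must check that the $\operatorname{GL}_2$-orbits really sweep all of $\hat G_2\setminus\Pi$ into the single chart $\{uv\neq0\}$, and in the second that the tangent spaces $T_pF_\lambda$ genuinely move with $\lambda$ so that their span jumps above dimension five. Both are concrete finite verifications. I would confirm them, and indeed settle the statement outright, by a direct computation with Macaulay2: form the Jacobian ideal of the $4\times4$ Pfaffians defining $\hat G_2$, saturate it, and check that the resulting scheme is exactly the plane $\Pi=(c,k,f)$. Since this locus has codimension three, the same computation also lets one verify that $\hat G_2$ is normal, which retroactively justifies treating $\varphi$ as a parametrization in the first inclusion.
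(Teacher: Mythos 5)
Your first inclusion, $\operatorname{Sing}\hat G_2\subseteq\Pi$, has a genuine gap, and it sits exactly where the paper's proof does its real work. The set $\hat G_2\setminus\Pi$ is strictly larger than $\varphi(\mathbb{P}^5\setminus\mathbb{P}^3)$: since $\hat G_2$ is the \emph{closure} of the image of $\varphi$, it also contains the divisor $D'$ of Proposition \ref{hilbP3} (the image of the exceptional divisor of the blow-up of $C$), and in the coordinates of Proposition \ref{maptoP5} one has $D'=\hat G_2\cap\{g=h=j=0\}$ with $(g,h,j)=(uv,-u^2,v^2)$. A point of $\mathbb{P}^5\setminus C$ maps into $\{g=h=j=0\}$ only if $uv=u^2=v^2=0$, i.e.\ only from $\mathbb{P}^3\setminus C$, whose image is $\Pi$; hence $D'\setminus\Pi$ is entirely disjoint from $\varphi(\mathbb{P}^5\setminus C)$ and is never reached by your chart argument. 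Your $\operatorname{GL}_2$-homogeneity cannot repair this: the automorphisms $\operatorname{diag}(I_4,B)$ preserve the span of the quadrics $u^2,uv,v^2$, hence preserve the linear space $\{g=h=j=0\}$ and therefore $D'$, while $D'\cap\{g\neq0\}=\emptyset$; so no such automorphism moves any point of $D'$ into the smooth chart. In particular the ``finite verification'' you propose for the sweeping claim would not succeed --- the claim is false, not merely unverified. The paper handles precisely this four-dimensional locus by a separate argument: for any two members $F_1,F_2$ of the pencil of Remark \ref{pencilfano} there is a hyperplane whose section of $\hat G_2$ decomposes as $F_1\cup F_2\cup D'$, which reduces smoothness of $\hat G_2$ along $D'\setminus\Pi$ to smoothness of $D'$ there, and the latter is proved by exhibiting $D'\setminus\Pi$ as a vector bundle over $C\times\mathbb{P}^1$.

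Your second inclusion, $\Pi\subseteq\operatorname{Sing}\hat G_2$, is sound in outline and runs parallel to the paper's: the paper observes that each point of $\Pi$ is a secant line of the distinguished conic and hence lies on two projective $3$-spaces contained in $\hat G_2$ meeting only at that point, so the tangent space has dimension at least $3+3=6>5$; your variant with two Fano fourfolds needs $T_pF_{\lambda_1}\cap T_pF_{\lambda_2}=T_p\Pi$, which would follow from $F_{\lambda_1}\cap F_{\lambda_2}=\Pi$ holding scheme-theoretically, a checkable statement. Finally, the Macaulay2 computation of the saturated Jacobian ideal that you fall back on would indeed prove the lemma on its own (and is consistent with the paper's computational style), but be aware that it is then a replacement proof, not a confirmation of your two-step geometric argument, one half of which cannot be salvaged as stated.
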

\begin{proof}
To see that the distinguished plane is singular it is enough to observe that
each line secant to the distinguished conic $C$ is the
common element of two projective three-spaces contained in $\hat{G}_2$. These are the spaces of lines corresponding to the
points of intersections of the secant line with $C$. By the same argument it follows also that
the divisor $D'$ is singular in the plane. To check smoothness outside let us perform the following argument.
Clearly the system $|2H-E|$ on the blow up of $\mathbb{P}^5$
in the twisted cubic separates points and tangent directions outside the pre-image transform
of the $\mathbb{P}^3$ spanned by the twisted cubic. It remains to study the image of the exceptional divisor, which is $D'$.
Now observe that for any $F_1$ and $F_2$ in the pencil of Fano fourfolds described in Remark \ref{pencilfano} there is a hyperplane
in $\mathbb{P}^{13}$ whose intersection with $\hat{G}_2$ decomposes in $F_1$, $F_2$ and $D'$. It follows that the singularities of $\hat{G}_2$
may occur only in the singularities of $D'$ and in the base points of the pencil. We hence need only to prove that  $D'$ is smooth outside
the distinguished plane. This follows directly from the description of the complement of the plane in $D'$ as a vector bundle over
the product of the twisted cubic with $\mathbb{P}^1$.

\end{proof}
\begin{rem} Observe that the map induced on $\mathbb{P}^5$ contracts only the secant lines of the twisted cubic to
 distinct points of the distinguished $\mathbb{P}^2$.
\end{rem}

\begin{rem}
A generic codimension 2 section of $\hat{G}_2$ by 2 hypersurfaces is nodal. We check this by taking a codimension 2 linear section and
looking at its singularity.
\end{rem}

\begin{lemm}The variety $\hat{G}$ is a flat deformation of $G$.
\end{lemm}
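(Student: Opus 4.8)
The plan is to exhibit both varieties as fibres of a single one-parameter family and then to appeal to the standard flatness criterion over a smooth curve. The natural family is the one of four-forms interpolating between the two chosen representatives: set $\omega_t=\omega_0+t\,x_1\wedge x_2\wedge x_3\wedge x_4\wedge$-type correction, that is $\omega_t=\omega_0+t\,x_1\wedge x_2\wedge x_4\wedge x_5$, so that $\omega_1=\omega$ is the non-degenerate form of Section 2 while $\omega_0$ is the degenerate form defining $\hat G_2$. For all but finitely many $t$ the form $\omega_t$ lies in the open orbit, hence is non-degenerate, and the map $\eta\mapsto\eta\wedge\omega_t$ from $\bigwedge^2V$ to $\bigwedge^6V$ has rank $7$; this also holds at $t=0$, since $\hat G_2$ spans a $\mathbb{P}^{13}$. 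Thus the kernel $W_t$ is $14$-dimensional for every $t$ in our range, and $G_2(t)=\{[\eta]\in G(2,V)\mid \eta\wedge\omega_t=0\}$ spans $\mathbb{P}(W_t)\cong\mathbb{P}^{13}$. Trivialising the resulting $\mathbb{P}^{13}$-bundle over $\mathbb{A}^1$ produces a subscheme $\mathcal{G}\subset\mathbb{P}^{13}\times\mathbb{A}^1$, cut out by the $4\times4$ Pfaffians of a skew matrix $M_t$ with entries linear in the coordinates and in $t$, which interpolates between the matrix of Section 2 (at $t=1$) and that of Section 3 (at $t=0$); its fibre $\mathcal{G}_t$ is projectively isomorphic to $G_2$ for $t\neq0$ and equals $\hat G_2$ at $t=0$.

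First I would prove flatness away from the origin, where it is automatic: over $\mathbb{A}^1\setminus\{0\}$ (shrunk to remove the finitely many bad values of $t$) every fibre is projectively isomorphic to $G_2$, so $\mathcal{G}^\circ:=\mathcal{G}|_{\mathbb{A}^1\setminus\{0\}}\to\mathbb{A}^1\setminus\{0\}$ is a fibre bundle with integral fibre; hence $\mathcal{G}^\circ$ is integral of dimension $6$ and flat over its base. Its schematic closure $\overline{\mathcal{G}^\circ}$ in $\mathcal{G}$ is then an integral scheme dominating the smooth curve $\mathbb{A}^1$, and a dominant morphism from an integral scheme to a smooth curve is automatically flat. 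Consequently the flat limit $Z_0$, the fibre of $\overline{\mathcal{G}^\circ}$ over the origin, is a subscheme of the actual special fibre $\mathcal{G}_0=\hat G_2$ whose Hilbert polynomial coincides with that of $G_2$; in particular $\dim Z_0=5$ and $\deg Z_0=\deg G_2$.

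It then remains to identify $Z_0$ with the whole of $\hat G_2$. By Proposition \ref{maptoP5} the variety $\hat G_2$ is the birational image of $\mathbb{P}^5$, hence irreducible of dimension $5$, so $Z_0\subseteq\hat G_2$ forces the two to share the same support. To upgrade this to a scheme-theoretic equality I would verify that $\hat G_2$ is reduced and that $\deg\hat G_2=\deg G_2\,(=18)$. Both are direct computations of the sort already used in the paper: the degree of $\hat G_2$ is $(2H-E)^5$ on the blow-up of $\mathbb{P}^5$ along the twisted cubic, using that $|2H-E|$ maps it birationally onto $\hat G_2$ and contracts only the secant lines of the cubic, while reducedness and the Hilbert polynomial of the Pfaffian ideal of $M_0$ are checked in Macaulay2. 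Once $P_{\hat G_2}=P_{G_2}$ is established, the Hilbert polynomial of the fibres of $\mathcal{G}\to\mathbb{A}^1$ is constant, and constancy of the Hilbert polynomial over the reduced connected base $\mathbb{A}^1$ is equivalent to flatness of the whole family; specialising to $t=0$ then exhibits $\hat G_2$ as a flat deformation of $G_2$.

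The main obstacle is precisely this last point. A priori the Pfaffian ideal could degenerate at $t=0$ by acquiring lower-dimensional or embedded components, or the scheme-theoretic fibre could be a non-reduced thickening of $Z_0$; in either case the special fibre would have a strictly larger Hilbert polynomial and flatness would fail. Ruling this out — equivalently, proving that $\hat G_2$ is reduced of the same degree as $G_2$ — is the heart of the matter. The cleanest alternative to the explicit degree count is to show that the total space $\mathcal{G}$ is Cohen--Macaulay and then invoke miracle flatness: a morphism from a Cohen--Macaulay scheme to a regular curve all of whose fibres are equidimensional of dimension $5=\dim\mathcal{G}-1$ is flat. This route avoids computing Hilbert polynomials altogether, at the cost of transferring the difficulty to verifying the Cohen--Macaulay property of $\mathcal{G}$, which again one would settle by the Pfaffian structure together with a direct computation.
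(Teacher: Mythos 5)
Your proposal is correct and follows essentially the same route as the paper: the identical interpolating family $\omega_t=\omega_0+t\,x_1\wedge x_2\wedge x_4\wedge x_5$ realized by Pfaffians of a $t$-dependent skew matrix, with flatness deduced from equality of the Hilbert polynomials of $G_2$ and $\hat G_2$ computed in Macaulay2. Your additional care about why the Hilbert-polynomial comparison suffices (flat limits, possible embedded components, the miracle-flatness alternative) is a more explicit justification of the step the paper leaves implicit, not a different argument.
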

\begin{proof}
We observe that both varieties arise as linear sections of $G(2,V)$ by some
$\mathbb{P}^{10}$. Moreover we easily find
an algebraic family with those as fibers. Indeed consider the family parameterized by $t\in \mathbb{C}$ 
of varieties given in $\mathbb{P}^{13}$ by the $4\times 4$ Pfaffians of the matrices:

$$\left(\begin{array}{ccccccc}
0&-tf&e&g&h&i&a\\
f&0&-d&j&-g-tk&l&b\\
-e&d&0&m&n&k&c\\
-g&-j&-m&0&tc&-b&d\\
-h&-k&-n&-c&0&a&e\\
-i&-l&g+k&b&-a&0&f\\
-a&-b&-c&-d&-e&-f&0
  \end{array}\right).
$$
For each $t\in\mathbb{C}$ the equations describe the variety of isotropic five-spaces with respect to the form
$\omega_t= x_1\wedge x_2 \wedge x_3 \wedge
x_7+x_4\wedge x_5 \wedge x_6 \wedge x_7 + x_2\wedge x_3 \wedge x_5 \wedge x_6 + x_1\wedge x_3
\wedge x_4 \wedge x_6+t x_1\wedge x_2 \wedge x_4 \wedge x_5$. The latter is a nondegenerate fourform for $t\neq 0$. It follows 
that for $t\neq 0$ the corresponding fiber of the family is isomorphic to $G_2$ and for $t=0$ it is equal to $\hat{G}_2$. 
  
The assertion then follows from the equality of their Hilbert polynomials, which
we compute using MACAULAY 2.
\end{proof}

\section{Further degenerations}
Observe that one can further degenerate $\hat{G}_2$ by considering degenerations
of the twisted cubic $C$ in $\mathbb{P}^5$.
In particular the twisted cubic can degenerate to one of the following:
\begin{itemize}
 \item the curve $C_0$ which is the sum of a smooth conic and a line
intersecting
it in a point
\item  a chain $C_1$ of three lines spanning a $\mathbb{P}^3$
\item a curve $C_2$ consisting of three lines passing through a common point and spanning
a $\mathbb{P}^3$
\end{itemize}

Let us consider the three cases separately.

Let us start with the conic and the line. In this case we can assume that the
ideal of $C_0$
is given in $\mathbb{P}^5$ by $\{u=0,v=0\}$
and the minors of the matrix
$$\left( \begin{array}[c]{ccc}
x,y,z\\
t,x,0
\end{array} \right).
$$
Then the image of $\mathbb{P}^5$ by the system of quadrics containing $C_0$ can
also be written as a section of $G(2,7)$ consisting of
two-forms killed by the four-form  $\omega_1= x_1\wedge x_2 \wedge x_3 \wedge
x_7+ x_2\wedge x_3 \wedge x_5 \wedge x_6 + x_1\wedge x_3 \wedge x_4
\wedge x_6$. To find the deformation family we consider the family of varieties
isomorphic to $\hat{G}_2$ corresponding to twisted cubics given by
$\{u=0,v=0\}$
and the minors of the matrix
$$\left( \begin{array}[c]{ccc}
x,y,z\\
t,x,\lambda y
\end{array} \right).
$$
We conclude comparing Hilbert polynomials.
\begin{rem}
The forms $\omega_0$ and $\omega_1$ represent the only two orbits of forms in
$\bigwedge^3(V)$ whose corresponding isotropic varieties are
flat degenerations of $G_2$. To prove it we use the representatives of all 9 orbits
contained in \cite{ott} and check one by one the invariants of
varieties they define using Macaulay2. In all other cases the dimension of the isotropic variety is higher.
\end{rem}

In the case of a chain of lines the situation is a bit different.

\begin{prop} The variety $G_2$ admits a degeneration over a disc to a Gorenstein
toric Fano 5-fold whose only singularities are
3 conifold singularities in codimension $3$ toric strata of degree 1.
\end{prop}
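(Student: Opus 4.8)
First I would fix coordinates on $\mathbb{P}^5=\mathbb{P}(x,y,z,t,u,v)$ so that $C_1$ is the chain of the three coordinate lines $\ell_1=\{z=t=u=v=0\}$, $\ell_2=\{x=t=u=v=0\}$, $\ell_3=\{x=y=u=v=0\}$ inside $\mathbb{P}^3=\{u=v=0\}$, meeting in the two points $\ell_1\cap\ell_2$ and $\ell_2\cap\ell_3$ and spanning the $\mathbb{P}^3$. Its homogeneous ideal is the monomial ideal generated by $u$, $v$ together with $xz$, $xt$, $yt$, so the fourteen quadrics through $C_1$ are all monomials. Hence the map $\varphi$ defined by this system is monomial and the closure $Y$ of $\varphi(\mathbb{P}^5)$ is a projective toric fivefold, whose polytope is the convex hull of the fourteen exponent vectors; substituting these monomials into the defining Pfaffian matrix, exactly as in Proposition \ref{maptoP5}, exhibits $Y$ as the corresponding degenerate Pfaffian locus.

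Next I would establish the flat degeneration. Since $C_1$ is a flat limit of twisted cubics, I proceed as in the conic-and-line case and introduce a one-parameter family of matrices whose minors deform the twisted cubic $C$ to $C_1$; the associated family of images is flat, with general fibre isomorphic to $\hat{G}_2$ and central fibre $Y$. Concatenating it with the flat degeneration $G_2\rightsquigarrow\hat{G}_2$ constructed in the previous lemma yields a degeneration of $G_2$ to $Y$ over a disc, and flatness is confirmed, exactly as before, by checking in Macaulay2 that all fibres share the Hilbert polynomial of $G_2$. In particular $Y$ inherits the degree and the anticanonical degree of the Fano $G_2$, whose Fano index is $3$.

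It then remains to analyse $Y$ as a toric variety. From the fan dual to the polytope above I would verify the Gorenstein Fano property by exhibiting the anticanonical polytope and checking that it is reflexive, that is, that $K_Y$ is Cartier and $-K_Y$ is ample; ampleness and the anticanonical degree are forced by the flat family. The singularities are then located combinatorially: the only non-smooth cones of the fan should be three three-dimensional cones over a quadrilateral, each with local model $\{xy=zw\}\times\mathbb{C}^2$, so that $Y$ acquires a transverse ordinary double point, that is, a conifold point, along a two-dimensional (codimension three) toric orbit closure, which is a plane of degree $1$.

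The hard part is this last step: proving that these three strata carry exactly conifold singularities and that no worse singularities occur. I would settle it by the local toric model, each offending cone being lattice-equivalent to the standard cone over a square times a smooth factor, and cross-check the global singular scheme directly in Macaulay2, confirming that it is reduced, equal to the union of three planes, and has generic transverse singularity a node.
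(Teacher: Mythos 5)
Your proposal is correct and follows essentially the same route as the paper: reduce to showing that $\hat{G}_2$ degenerates to the toric variety (concatenating with the already-established flat degeneration $G_2\rightsquigarrow\hat{G}_2$), obtain the toric fivefold as the image of $\mathbb{P}^5$ under the monomial system of quadrics through the chain of lines, interpolate via a one-parameter family of determinantal curves $C_\lambda$ joining the twisted cubic to $C_1$, verify flatness by comparing Hilbert polynomials, and check the three conifold strata computationally (the paper does this in Magma via the explicit reflexive polytope, which your combinatorial cone-over-a-quadrilateral analysis matches). The only detail left implicit in your write-up is the explicit choice of the family $C_\lambda$ (the paper uses the matrix with entries $\lambda x, y, z; t, x, \lambda y$, and a careless choice of deformation would give the wrong central fibre), but this is a matter of bookkeeping rather than a gap in the argument.
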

\begin{proof}
As $\hat{G}_2$ is a degeneration of $G_2$ over a disc it is enough to prove that
$\hat{G}_2$ admits such a degeneration. We know that the latter
is the image of $\mathbb{P}^5$ by the map defined by the system of quadrics
containing a twisted cubic $C$. Let us choose a coordinate
system $(x,y,z,t,u,v)$ such that $C$ is given in $\mathbb{P}^5$ by $\{u=0,v=0\}$
and the minors of the matrix
$$\left( \begin{array}[c]{ccc}
x,y,z\\
t,x,y
\end{array} \right),
$$
then choose the chain of lines $C_1$ to be defined by $\{u=0,v=0\}$ and the
minors of the matrix
$$\left(\begin{array}[c]{ccc}
0,y,z\\
t,x,0
\end{array}\right).
$$ Let $T$ be the variety in $\mathbb{P}^{13}$ defined as the closure of the
image of $\mathbb{P}^5$ by the system of quadrics containing $C_0$.
It is an anti-canonically embedded toric variety with corresponding dual
reflexive
polytope:
$$\begin{array}{ccccccc}
(& 0 &  0 &  1 &  0 &  0 & ) \\
(& 0 &  0 &  0 &  1 &  0 &)\\
(& -1 & -1 & -1 & -1 & -1 &)\\
(& 0 &  0 &  0 &  0 &  1 &)\\
(& 1 &  0 &  0 &  0 &  0 &)\\
(& 0 &  1 &  0  &  0 &  0 &)\\
(& 1 &  1 &  1 &  0  &  1 &)\\
 (&0 &  0 & -1 &  0 & -1 &)\\
 (&1 &  1 &  0 &  1 &  1 &)
\end{array}$$

We check using Magma that the singular locus of this polytope has three conifold
singularities along codimension 3 toric strata of degree 1.

Consider the family of quadrics parameterized by $\lambda$ containing the curves
$C_{\lambda}$ defined by $\{u=0,v=0\}$ and the minors of the matrix
$$\left(\begin{array}[c]{ccc}
\lambda x,y,z\\
t,x,\lambda y
\end{array}\right).
$$
For each $\lambda\neq 0$ the equations of the image of $\mathbb{P}^5$ by the corresponding system of quadrics agree 
with the minors of the matrix:
$$\left(\begin{array}{ccccccc}
0&0&\lambda e&g&h&i&a\\
0&0&-\lambda d&j&-g&l&b\\
-\lambda e&\lambda d&0&m&n&k&c\\
-g&-j&-m&0&0&-\lambda b&d\\
-h&g&-n&0&0&\lambda a&e\\
-i&-l&k&\lambda b&-\lambda a&0&f\\
-a&-b&-c&-d&-e&-f&0
  \end{array}\right),
$$
in the coordinates $(a,\dots,n)=(uy,vy,yt-x^2,-vx,ux, y^2-xz,uv,
-u^2,uz,v^2,-x y+z t,vz,vt,ut).$
The latter define a variety isomorphic to $\hat{G_2}$ for each $\lambda$.
It is easy to check that this family degenerates to $T$ when $\lambda$ tends to
0. By comparing Hilbert polynomials we obtain that it is
a flat degeneration of $\hat{G}_2$, hence of $G_2$.
\end{proof}
In the case of the twisted cubic degenerating to three lines meeting in a point
we obtain
 a Gorenstein toric Fano 5-fold with 4 singular strata of codimension 3 and
degree 1 which is a flat deformation of $G_2$.
The corresponding dual reflexive polytope is:
 $$\begin{array}[c]{ccccccc}
    (&-1& -1& -1& -1& -1&)\\
    (&0& 0& 1& 0& 0&)\\
    (&0& 0& 0& 1& 0&)\\
    (&0& 0& 0& 0& 1&)\\
    (&1& 0& 0& 0& 0&)\\
    (&0& 1& 0& 0& 0&)\\
    (&1& 1& 1& 1& 0&)\\
    (&1& 1& 1& 0& 1&)\\
    (&1& 1& 0& 1& 1&)\\
    (&2& 2& 1& 1& 1&)\end{array}$$

\subsection{Application to mirror symmetry}
One of the methods of computing mirrors to Calabi-Yau threefolds is to find their
degenerations to complete intersections in Gorenstein toric Fano varieties.
Let us present the method, contained in \cite{BCKS}, in our context. 
We aim to use the constructed toric degeneration to compute the
mirror of the Calabi-Yau threefold $X_{36}$. As the construction is still partially conjectural we omit details in what follows.

Consider the degeneration of $G_2$ to $T$. We have, $X_{36}$ is a generic intersection 
of $G_2$ with a hyperplane and a quadric. On the other hand when we intersect $T$ with
a generic hyperplane and a generic quadric we get a Calabi-Yau threefold $\hat{Y}$ with 6 nodes. It follows
that $\hat{Y}$ is a flat degeneration of $X_{36}$. Moreover $\hat{Y}$ admits a small resolution 
of singularities, which is also a complete intersection in a toric variety. We shall denote it by $Y$.
The variety $Y$ is a smooth Calabi-Yau threefold connected to $X$ by a conifold transition. 
Due to results of \cite{BB} the variety $Y$ has a mirror family $\mathcal{Y}^*$ with generic element denoted by $Y^*$. 
The latter is found explicitly as a complete intersection in a toric variety obtained from the description 
of $Y$ by the method of nef partitions.   
The authors in \cite{BB} prove that there is in fact a canonical isomorphism between $H^{1,1}(Y)$ and $H^{1,2}(Y^*)$. 
Let us consider the one parameter subfamily $\mathcal{X}^*$ of the family $\mathcal{Y}^*$ corresponding 
to the subspace of $H^{1,2}$ consisting of elements associated by the above isomorphism 
to the pullbacks of Cartier divisors from $\hat{Y}$.

The delicate part of this mirror construction of $X_{36}$ is to prove that a generic Calabi--Yau 
threefold from the subfamily $\mathcal{X}^*$ has 6 nodes satisfying an appropriate number of relations.  
This is only a conjecture (\cite[Conj. 6.1.2]{BCKS}) which we are still unable to solve, also in this case.
Assume that the conjecture is true. We then obtain a construction of the mirror family of $X_{36}$
as a family of small resolutions of the elements of the considered subfamily.

\section{A geometric bi-transition}
In this section we construct two geometric transitions between Calabi--Yau
threefolds based on the map from Proposition \ref{maptoP5}.

Let us consider a generic section $X$ of $\hat{G_2}$ by a hyperplane and a
quadric. Observe that $X$ has exactly two nodes and admits a smoothing to a Borcea
Calabi-Yau threefold $X_{36}$ of degree 36. Observe moreover that $X$ contains a system
of
smooth K3 surfaces each two intersecting in exactly the two nodes. Namely these
are the intersections of the pencil of $F$ with the quadric and the
hyperplane. Blowing up any of them is a resolution of singularities of $X$. Let
us consider the
second resolution i.e. the one with the exceptional lines flopped. It is a
Calabi-Yau threefold $Z$ with a fibration by K3 surfaces of genus 6
and generic Picard number 1. Observe moreover that according to proposition
$\ref{maptoP5}$ the map $\varphi^{-1}$ factors through the blow up
$\tilde{\mathbb{P}}^5$ of $\mathbb{P}^5$ in the twisted cubic $C$. Let $E$ be
the exceptional divisor of the blow up and
$H$ the pullback of the hyperplane from $\mathbb{P}^5$.  In this context $Z$ is
the intersection of two generic divisors of type
$|2H-E|$ and $|4H-2E|$ respectively.
\begin{lemm}\label{PicZ}
The Picard number $\rho(Z)=2$
\end{lemm}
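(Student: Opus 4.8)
The plan is to compute $\rho(Z)$ by relating $Z$ to the blow-up $\tilde{\mathbb{P}}^5$ of $\mathbb{P}^5$ along the twisted cubic $C$, using the Lefschetz hyperplane theorem together with the explicit description of $Z$ as a complete intersection of divisors of type $|2H-E|$ and $|4H-2E|$. First I would record that $Z$, being a smooth complete intersection of two ample (or at least nef and big) divisors in $\tilde{\mathbb{P}}^5$, sits inside a fourfold and then a fivefold to which one can apply the Lefschetz theorem. Since $\tilde{\mathbb{P}}^5$ is the blow-up of $\mathbb{P}^5$ along a smooth curve, its Picard group is free of rank $2$, generated by $H$ (the pullback of the hyperplane class) and $E$ (the exceptional divisor). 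The strategy is to show that the two successive hyperplane-type sections do not kill either generator and do not create new ones.

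The key technical point is the version of the Lefschetz hyperplane theorem for ample divisors in smooth projective varieties: if $Y\subset M$ is a smooth ample divisor in a smooth projective variety $M$ of dimension $\geq 4$, then the restriction map $\operatorname{Pic}(M)\to\operatorname{Pic}(Y)$ is an isomorphism. I would first verify that $2H-E$ is ample on $\tilde{\mathbb{P}}^5$ — this is exactly the statement (established in the earlier lemma on the singular locus of $\hat{G}_2$) that $|2H-E|$ separates points and tangent directions away from the exceptional locus, together with a check that it is positive on the curves contracted by $\varphi^{-1}$, namely the strict transforms of the secant lines of $C$, which it contracts but with the class still nef; more precisely I would use that a general member is a smooth threefold-ambient-fourfold and invoke Grothendieck–Lefschetz. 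Thus a general $V\in|2H-E|$ is a smooth fourfold with $\operatorname{Pic}(V)\cong\operatorname{Pic}(\tilde{\mathbb{P}}^5)=\mathbb{Z} H\oplus\mathbb{Z} E$, so $\rho(V)=2$. Applying the same theorem once more to the ample (by the same separation argument, as $4H-2E = 2(2H-E)$ is even more positive) divisor $|4H-2E|$ restricted to $V$, I get that $Z$, a smooth threefold, satisfies $\operatorname{Pic}(Z)\cong\operatorname{Pic}(V)$, hence $\rho(Z)=2$.

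The main obstacle I anticipate is that $2H-E$ is \emph{not} ample on $\tilde{\mathbb{P}}^5$: the earlier results show that $\varphi^{-1}$, equivalently the morphism given by $|2H-E|$, contracts the strict transforms of the secant lines of $C$ to the points of the distinguished plane, so $2H-E$ is only nef and big, not ample, and the naive Grothendieck–Lefschetz theorem does not apply directly. To circumvent this I would work instead on the image $\hat{G}_2\subset\mathbb{P}^{13}$ and its linear sections, or equivalently pass to the model where $Z$ is realized as a section of $\hat{G}_2$ by a hyperplane and a quadric, as described at the start of this section. There the relevant ambient is $\hat{G}_2$ itself, whose Picard group I would identify, and one must account for the contracted secant lines: the effect of the small contraction is that $H$ and $E$ map to classes whose independence on $Z$ must be checked by hand. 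A clean way is to argue that a general $Z$ meets the exceptional divisor $E$ in a surface and the hyperplane class $H$ restricts nontrivially, so both classes survive and remain independent in $H^2(Z,\mathbb{Z})$; independence can be confirmed by exhibiting two curves on $Z$, one from a fiber of the contraction and one from a general hyperplane, on which the intersection matrix with $H$ and $E$ is nondegenerate. Combining the Lefschetz-type surjectivity (which holds for nef and big divisors after suitable care, or via the Hodge-theoretic statement that $H^2(Z)$ receives all of $H^2$ of the ambient fourfold) with this explicit independence of the two classes yields $\rho(Z)=2$.
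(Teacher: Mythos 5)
You have set up the right framework and, importantly, you have correctly located the one real difficulty: $\operatorname{Pic}(\tilde{\mathbb{P}}^5)=\mathbb{Z}H\oplus\mathbb{Z}E$, and the divisors $2H-E$ and $4H-2E$ are only base point free and big, not ample, so the classical Grothendieck--Lefschetz theorem does not apply. But your proposal does not actually close this gap, and this is exactly where the content of the lemma lies. The paper closes it with a single citation: by \cite[Thm.~6]{RavSrin} (the Grothendieck--Lefschetz theorem for normal projective varieties), the restriction of Picard groups is an isomorphism for a \emph{general} member of a base point free and big linear system in dimension $\geq 4$; applying this twice, to $|2H-E|$ on $\tilde{\mathbb{P}}^5$ and then to $|4H-2E|$ restricted to a general member, gives $\operatorname{Pic}(Z)\cong\operatorname{Pic}(\tilde{\mathbb{P}}^5)$ at once. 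Note also that the paper records the geometric input that makes such a theorem applicable here: both morphisms contract only the proper transform of the $\mathbb{P}^3$ spanned by $C$ (a codimension $2$ locus) to $\mathbb{P}^2$, so no divisor is contracted. Some hypothesis of this kind is unavoidable: for the blow-up of $\mathbb{P}^5$ at a point with the (semiample and big) pullback of $\mathcal{O}(1)$, the general member misses the exceptional divisor entirely, so the restriction map on Picard groups is not even injective. Consequently your phrase ``Lefschetz-type surjectivity \dots after suitable care'' is not a known black box one can invoke for merely nef and big divisors; it is precisely the theorem that has to be supplied, and supplying it is the proof.

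Your fallback route has two further concrete problems. First, $Z$ is \emph{not} a section of $\hat{G}_2$ by a hyperplane and a quadric: that variety is the nodal threefold $X$, and $Z$ is a small resolution of it (the one with the exceptional lines flopped). Working on $\hat{G}_2$ you would be computing $\operatorname{Pic}(X)$ or $\operatorname{Cl}(X)$, and transferring this to $Z$ requires both a Lefschetz-type theorem on the \emph{singular} ambient $\hat{G}_2$ (again Ravindra--Srinivas, now in its class-group form) and the identification of $\operatorname{Pic}(Z)$ with $\operatorname{Cl}(X)$ across the small resolution --- none of which is easier than what you are trying to prove. Second, exhibiting two curves on $Z$ whose intersection matrix against $H|_Z$ and $E|_Z$ is nondegenerate proves only $\rho(Z)\geq 2$; the substance of the lemma is the upper bound $\rho(Z)\leq 2$, i.e.\ that no \emph{new} divisor classes appear on $Z$, and that is exactly the surjectivity statement your argument leaves open. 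So the proposal, as written, has a genuine gap at the precise point the paper resolves by citing \cite[Thm.~6]{RavSrin}.
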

\begin{proof} We follow the idea of \cite{GK}.  Observe
that both systems $|2H-E|$ and $|4H-2E|$ are base point free and big on
$\tilde{\mathbb{P}}^5$. On $\tilde{\mathbb{P}}^5$ both divisors contract the
proper transform of $\mathbb{P}^3$ to $\mathbb{P}^2$. It follows by \cite[Thm.
6]{RavSrin}
that the Picard group of $Z$ is isomorphic to the Picard group of
$\tilde{\mathbb{P}}^5$ which
is of rank 2.
\end{proof}
 Moreover $Z$ contains a divisor $D'$ fibered by conics. In one hand $D'$ is the
proper transform of the divisor $D$ from Proposition \ref{hilbP3}
by the considered resolution of singularities, on the other hand $D'$ is the
intersection of $Z$ with the exceptional divisor $E$.
It follows that $D'$ is contracted to a twisted cubic in $\mathbb{P}^5$ by the
blowing down of $E$ and the contraction is primitive
by Lemma \ref{PicZ}.
It follows that $Z$ is connected by a conifold transition involving a primitive
contraction of 2 lines with $X$, and by a geometric transition involving a type
III primitive contraction with the complete intersection $Y_{2,4}\subset \mathbb{P}^5$.
\begin{rem}
We can look also from the other direction. Let $C$ be a twisted cubic, $Q_2$ a generic
quadric containing it, and $Q_4$ a generic quartic singular along it. Then the intersection
$Q_2\cap Q_4$ contains the double twisted cubic and two lines secant to it. Taking the map defined
by the system of quadrics containing $C$ the singular cubic is blown up and the two secant lines are contracted to 2 nodes.
\end{rem}

\section{Polarized K3 surfaces genus 10 with a $g^1_5$}
In this section we investigate polarized K3 surfaces of genus 10 which appear as sections of the varieties studied in this paper.
\begin{prop} \label{ciecia osobliwego g2 maja g15} A polarized K3 surface $(S,L)$, which
is a proper linear section of a $G_2$ does not admit a $g^1_5$.
\end{prop}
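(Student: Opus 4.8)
The plan is to reduce the statement to curves and then to obstruct the existence of a $g^1_5$ by means of the associated Lazarsfeld--Mukai bundle. First I would use adjunction: since $K_S=\mathcal{O}_S$, any smooth $C\in|L|$ satisfies $L|_C=K_C$, so $C$ is a canonically embedded curve of genus $10$ spanning a $\mathbb{P}^9$ inside the linear span of $G_2$, and $C\subset G_2\cap\mathbb{P}^9\subset G(2,V)$. Thus it suffices to show that such a canonical genus $10$ curve carries no $g^1_5$. Assuming (after discarding base points, which only makes the pencil more special) a base point free $A=g^1_5$, the Lazarsfeld--Mukai construction produces a globally generated rank $2$ bundle $E$ on $S$ with $\det E=L$, $c_2(E)=5$, and $h^0(E)=h^0(A)+h^1(A)=2+6=8$. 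Its Mukai vector is $v(E)=(2,L,6)$, so that $\langle v(E),v(E)\rangle = L^2-2\cdot 2\cdot 6=18-24=-6$.

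The core observation is that this Mukai vector cannot occur on a proper $G_2$-section. Since $-6<-2$, the bundle $E$ is not Gieseker semistable, so there is a saturated destabilizing sub-line bundle $\mathcal{O}_S(N)$ with $N$ effective and $N\cdot L\ge\tfrac12 L^2=9$; because $H^0(\mathcal{O}_S(N))\hookrightarrow H^0(E)$ we must have $h^0(N)\le 8$. When $\operatorname{Pic}(S)=\mathbb{Z}L$ this is already a contradiction: $N=aL$ with $a\ge 1$ forces $h^0(N)\ge h^0(L)=11>8$. Hence a section of Picard number one has Clifford index $4$ and no $g^1_5$, in agreement with Brill--Noether theory, where $\rho(10,1,5)=-2$. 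For the general case one is reduced to the finitely many numerical types of the class $N$ (equivalently, through the gonality theory of Green--Lazarsfeld and Knutsen, to an effective class $D$ on $S$ with $D^2=0$ and $D\cdot L=5$, i.e.\ an elliptic quintic scroll of isotropic lines); each such class exhibits $S$ with a geometrically distinguished subvariety.

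The hard part is to exclude these extra classes on a \emph{proper} section of the \emph{non-degenerate} $G_2$, and this is exactly where the dichotomy between $G_2$ and $\hat{G}_2$ enters. For $\hat{G}_2$ the distinguished conic $Q=\{[v]\in\mathbb{P}(V)\mid v\wedge\omega_0\in G(5,7)\}$ and its secant lines produce a pencil of $\mathbb{P}^3$'s (Proposition~\ref{hilbP3}, Lemma~\ref{lemhilbP3}) and, after a two dimensional section, precisely the elliptic class realizing a $g^1_5$ (compare Proposition~\ref{Pic 2}). I would show conversely that the destabilizing class $N$ (equivalently the elliptic scroll $D$) forces a vector $v\in V$ with $v\wedge\omega$ decomposable, i.e.\ a point of the analogous conic $Q_\omega$, and hence a $\mathbb{P}^3\subset G_2$; by the explicit description of the four-form defining $G_2$ and the classification of orbits in $\bigwedge^4 V$ from \cite{ott}, this happens only when $\omega$ lies on the degree seven hypersurface, that is, when $\omega$ is degenerate. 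Since a proper section of $G_2$ has non-degenerate $\omega$, the conic $Q_\omega$ is empty and no such class exists, yielding the contradiction. I expect this last implication --- that a $g^1_5$ forces the decomposability of some $v\wedge\omega$ --- to be the main obstacle, to be handled by the same explicit coordinate computations in Macaulay2 used for $\hat{G}_2$, together with Lemma~\ref{3secantplanesg27} to control the span of the five isotropic lines in a fibre of the pencil.
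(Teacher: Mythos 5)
Your Lazarsfeld--Mukai argument is correct essentially only in the case $\operatorname{Pic}(S)=\mathbb{Z}L$, and that is not the content of the proposition: a \emph{proper} linear section of $G_2$ may perfectly well have Picard number $\geq 2$, and those special sections are exactly where the difficulty (and the later use of this proposition, to separate sections of $G_2$ from sections of $\hat G_2$) lies. For that general case you offer only a program: that the destabilizing class, reduced to an elliptic quintic class $D$ with $D^2=0$, $D\cdot L=5$, should force some $v\wedge\omega$ to be decomposable, hence a point of a conic $Q_\omega$ and a $\mathbb{P}^3\subset G_2$ --- a chain of implications you explicitly flag as ``the main obstacle'' and leave to unspecified coordinate computations. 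So the proof is incomplete precisely at its essential step. Two smaller inaccuracies compound this: $\langle v(E),v(E)\rangle=-6<-2$ gives directly that $E$ is non-simple (stable would imply simple), and turning this into a destabilizing sequence of line bundles requires the Lazarsfeld/Donagi--Morrison analysis of non-simple Lazarsfeld--Mukai bundles; and the reduction to a class with $D^2=0$, $D\cdot L=5$ silently discards the other Green--Lazarsfeld--type possibilities (classes behaving like a $g^1_4$ or a $g^2_7$), which the paper is careful to treat as separate hypotheses in Section 6.

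Moreover, the blueprint for your hard step aims at the wrong geometric structure. In $\hat G_2$ the $g^1_5$ on a surface section is cut by the pencil of \emph{quintic} Fano fourfolds $F$ (linear sections of a $G(2,5)$), i.e.\ by elliptic quintic curves; the $\mathbb{P}^3$'s parametrized by the conic $Q$ sweep out the divisor $D$ of degree $8$, which cuts degree-$8$ curves on a section, not the $g^1_5$. The paper's own proof is more elementary and uniform in the Picard number: by geometric Riemann--Roch (\cite[(2.7)]{Muk2}) a divisor of the $g^1_5$ consists of five points of $S$ spanning a $\mathbb{P}^3$; Lemma \ref{g15 on G(2,V)} (a Mukai-type span lemma, proved by a direct bivector computation and using Lemma \ref{3secantplanesg27}) places these five points inside $G(2,W)$ for some $5$-dimensional $W\subset V$; and for non-degenerate $\omega$ the intersection $G_2\cap G(2,W)$ is a cubic scroll (\cite[lem. 3.3]{M3l3}), whose section by the $\mathbb{P}^3$ spanned by the five points cannot be five isolated points. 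If you want to salvage your approach, the statement you actually need is this comparison: for non-degenerate $\omega$ every $G_2\cap G(2,5)$ is a cubic scroll, whereas for $\omega_0$ some $\hat G_2\cap G(2,5)$ jumps to the quintic fourfold $F$ --- that jump, not the conic $Q$ of $\mathbb{P}^3$'s, is where degeneracy of the four-form enters.
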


\begin{proof} Let us first prove the following lemma:
\begin{lemm}\label{g15 on G(2,V)} Let $p_1,\dots, p_5$ be five points on $G(2,V)$ of which no two lie on a line in $G(2,V)$ and no three
lie on a conic in $G(2,V)$ and such that they span a 3-space $P$.
Then $\{p_1,\dots, p_5\}\subset G(2,W)\subset G(2,V)$ for some five dimensional
subspace $W$ of $V$.
\end{lemm}
\begin{proof} Let $p_1,\dots,p_5$ correspond to planes $U_1,\dots,U_5\subset V$. By Lemma \ref{3secantplanesg27} we may
assume that no four of these points lie on a plane. Assume without loss of generality that $p_1,\dots,p_4$ span the 3-space.  If
$\dim(U_1+U_2+U_3+U_4)=6$ the assertion follows from \cite[Lemma 2.3]{Muk2}. We
need to exclude the case $U_1+U_2+U_3+U_4=V$. In this case (possibly changing the choice of $p_1,\dots,p_4$
from the set $\{p_1,\dots,p_5\}$) we may choose a basis in one of the two following ways
$\{v_1,\dots, v_7\}$ such that $v_1,v_2 \in U_1$, $v_3,v_4 \in U_2$, $v_5,v_6 \in
U_3$, and either $v_7,v_1+v_3+v_5 \in U_4$ or $v_7,v_1+v_3 \in U_4$. Each point of $P$ is then represented by a
bi-vector
$$w=a v_1\wedge v_2+b v_3\wedge v_4+c v_5\wedge v_6+d v_7\wedge (v_1+v_3+v_5),$$ or
$$w=a v_1\wedge v_2+b v_3\wedge v_4+c v_5\wedge v_6+d v_7\wedge (v_1+v_3),$$
for some $a,b,c,d \in \mathbb{C}$.
By simple calculation we have $w^2=0$ if and only if exactly one of the
$a,b,c,d$ is nonzero, which gives a contradiction with the existence of
$p_5$ as in the assumption.
\end{proof}

Now assume that $L$ has a $g^1_5$. It follows from \cite[(2.7)]{Muk2} that it is
given by five points on $L$ spanning a $\mathbb{P}^3$. By Lemma \ref{g15 on G(2,V)} these points
are contained in a section of $G_2$ with a $G(2,5)$. We conclude by \cite[lem. 3.3]{M3l3}
as five isolated points cannot be a linear section of a cubic scroll by a $\mathbb{P}^3$.
\end{proof}

\begin{prop}
 Every smooth polarized surface $(S,L)$ which appears as a complete linear section of
$\hat{G}_2$ is a $K3$ surface with a $g^1_5$.
\end{prop}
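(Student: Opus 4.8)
The plan is to establish the two assertions—that $S$ is a K3 surface and that it carries a $g^1_5$—separately, using the birational model of $\hat{G}_2$ over $\mathbb{P}^5$ from Proposition \ref{maptoP5}.

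For the K3 property I would argue as follows. Since $\hat{G}_2$ is a flat deformation of $G_2$ with the same Hilbert polynomial, it is Gorenstein with $\omega_{\hat{G}_2}\cong\mathcal{O}_{\hat{G}_2}(-3)$, exactly as for the adjoint variety $G_2$, whose index is $3$. A smooth complete linear section $S=\hat{G}_2\cap\Lambda$ with $\Lambda\cong\mathbb{P}^{10}$ necessarily avoids the distinguished singular plane, since otherwise $S$ would itself be singular there; hence $S$ lies in the smooth locus and adjunction gives $\omega_S\cong\omega_{\hat{G}_2}(3)|_S\cong\mathcal{O}_S$. It remains to check $h^1(\mathcal{O}_S)=0$, which I would obtain either from the Lefschetz theorem applied to the smooth proper transform $\tilde S$ in the simply connected blow-up $\tilde{\mathbb{P}}^5$, or directly from the equality of Hilbert polynomials with the $G_2$-case, where the sections are the genus $10$ K3 surfaces of Mukai. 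Either way $S$ is a K3 surface, and since $\deg S=18$ its genus is $10$.

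The heart of the statement is the production of the $g^1_5$. By Proposition \ref{maptoP5} the inverse of the projection identifies $S$ with the proper transform $\tilde S\subset\tilde{\mathbb{P}}^5$ of a surface in $\mathbb{P}^5$, the blow-up being taken along the twisted cubic $C$; under this identification the polarization is $L=(2H-E)|_{\tilde S}$, with $H$ the pullback of the hyperplane and $E$ the exceptional divisor. I would then use the pencil of hyperplanes of $\mathbb{P}^5$ containing the $\mathbb{P}^3$ spanned by $C$: their proper transforms form the pencil $|H-E|$, which by Remark \ref{pencilfano} corresponds on $\hat{G}_2$ to the pencil of Fano fourfolds $F_\lambda$. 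Restricting this pencil to $S\cong\tilde S$ yields a pencil of curves $D_\lambda\in|(H-E)|_{\tilde S}|$, and the claim is that these have $L$-degree $5$, so that the induced series on each smooth $X\in|L|$ is a $g^1_5$. The computation reduces to the intersection number $(2H-E)^4(H-E)$ on $\tilde{\mathbb{P}}^5$. Using that $C$ is a twisted cubic, so $\deg C=3$ and $\deg N_{C/\mathbb{P}^5}=16$, the nonzero monomials are $H^5=1$, $HE^4=-3$ and $E^5=-16$, while all $H^aE^b$ with $0<b<4$ vanish because $C$ is a curve; hence
$$(2H-E)^4(H-E)=(2H-E)^4H-(2H-E)^4E=13-8=5,$$
and as a check $(2H-E)^5=18$ recovers the degree. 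Thus each fibre of the projection of $\mathbb{P}^5$ from $\langle C\rangle$, restricted to $\tilde S$, is a divisor of $L$-degree $5$ moving in a pencil, which is the desired $g^1_5$.

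The main obstacle I anticipate is not the existence of the pencil but its precise numerical behaviour: one must confirm that $|H-E|$ restricts to a genuinely one-dimensional system on $\tilde S$ rather than to a fixed divisor, and that the general member is reduced, so that the series cut on a smooth $X\in|L|$ is honestly a $g^1_5$; this must be combined with the verification that $\tilde S\to S$ is an isomorphism, i.e.\ that the smooth section meets neither the base locus of $\varphi$ nor the distinguished plane. These points are controlled by the genericity implicit in the hypothesis of smoothness together with the intersection computation above.
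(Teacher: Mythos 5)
Your proposal is correct and is in substance the paper's own (two-sentence) proof: the paper deduces the K3 property from the fact that $\hat{G}_2$ is a flat deformation of $G_2$ (the alternative you yourself mention), and produces the $g^1_5$ as the degree-$5$ elliptic curve $S\cap F$, which is exactly a member of your pencil $(H-E)|_{\tilde{S}}$, since the Fano fourfolds $F_\lambda$ are the images of the divisors in $|H-E|$ (Remark \ref{pencilfano}). Your intersection computation $(2H-E)^4(H-E)=5$ (and the check $(2H-E)^5=18$) is correct and simply replaces the paper's implicit use of the fact that a codimension-$3$ linear section of the quintic Fano fourfold $F$ is an elliptic curve of degree $5$.
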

\begin{proof}
As $\hat{G}_2$ is a flat deformation of $G_2$ it's smooth complete linear
section of dimension 2 are K3 surfaces of genus 10.
Moreover each of these surfaces contains an elliptic curve of degree 5 which is
a section of the Fano fourfold $F$.
\end{proof}
Let us consider the converse.
 Let $(S,L)$ be polarized K3 surface of genus 10, such that $L$  admits a
$g^1_5$ induced by and elliptic curve $E$ and do not admit a $g^1_4$. By the theorem of Green and Lazarsfeld \cite{GL}
this is the case for instance when $L$ admits a $g^1_5$ and does not admit neither a $g^1_4$ nor a $g^2_7$. 

We have $E.L=5$ and $E^2=0$ hence $h^0(O(L)|_E)= 5$ and $h^0(O(L-E)|_E)= 5$.
It follows from the standard exact sequence that $h^0(O(L-E))\geq 6$ and
$h^0(O(L-2E))\geq 1$.
We claim that $|L-E|$ is base point free:
Indeed, denote by $D$ its moving part and $\Delta$ its fixed part. Clearly
$|D-E|$ is effective as $|L-2E|$ is.
Observe that $D$ cannot be of the form $kE'$ with $E'$ an elliptic curve,
because as $D-E$ is effective we would
have $E'=E$ hence $k\leq 3$ which would contradict $h^0(O(L-E))\geq 6$. Hence we
may assume
that $D$ is a smooth irreducible curve and $h^1(O(D))=0$. By Riemann-Roch we
have:
$$ 4+D^2=2 h^0(O(D))=2 h^0(O(D+\Delta)\geq 4+(D+\Delta)^2$$
and analogously:
$$ 4 =4+E^2=2 h^0(O(E))=2 h^0(O(E+\Delta)\geq 4+(E+\Delta)^2,$$
because $|D-E|$ being effective implies $\Delta$ is also the fixed part of
$|E+\Delta|$.
It follows that $L.\Delta=(D+E+\Delta).\Delta \leq 0$, which contradicts
ampleness of $L$.

It follows from the claim that $|L-E|$ is big, nef, base point free and
$h^0(O(L-E))= 6$.
Observe that $|L-E|$ is not hyper-elliptic. Indeed, first since $(L-E).E=5$ it
cannot be a double genus 2 curve. Assume now that
there exists an elliptic curve $E'$ such that $E'.(L-E)=2$ then $L.E\leq 4$
because $|L-2E|$ being effective implies $(L-2E).E'\geq 0$.
This contradicts the nonexistence of $g^1_4$ on $L$.

Hence $|L-E|$ defines a birational morphism to a surface of degree 8 in
$\mathbb{P}^5$. Observe moreover that the image of an element in $|L-2E|$
is a curve of degree 3 spanning a $\mathbb{P}^3$. The latter follows from the
fact that by the standard exact sequence
$$0\longrightarrow O(E)\longrightarrow O(L-E)\longrightarrow
O_{\Gamma}(L-E)\longrightarrow 0$$ for $\Gamma\in|L-2E|$ we have
$h^0((L-E)|_\Gamma)=4$.

Next we have two possibilities:
\begin{enumerate}
 \item The system $|L-E|$ is trigonal, then the image of $\varphi_{|L-E|}(S)$ is
contained in a cubic threefold scroll. The latter is either the
Segre embedding of $\mathbb{P}^1\times \mathbb{P}^2$ or a cone over a cubic
rational normal scroll surface.
\item The surface  $\varphi_{|L-E|}(S)$ is a complete intersection of three
quadrics.
\end{enumerate}

Moreover for the image $C=\varphi_{|L-E|}(\Gamma)$ we have the following
possibilities:
\begin{itemize}
 \item Either $C$ is a twisted cubic,
\item or $C$ is the union of a conic and a line,
\item or $C$ is the union of three lines.
\end{itemize}
Consider now the
composition $\psi$ of $\varphi_{|L-E|}$ with the birational
map given by quadrics in $\mathbb{P}^5$ containing $C$. It is given by
a subsystem of $|L|=|2(L-E)-(L-2E)|$. Moreover in every case above $\psi(S)$
spans a $\mathbb{P}^{10}$, because in each case the space of
quadrics containing $\varphi_{|L-E|}(S)$ is three dimensional. It follows that
$\psi$ is
defined on $S$ by the complete linear system $|L|$.
Finally $(S,L)$ is either a proper linear section of one of the three considered
degenerations of $G_2$ or a divisor in the blow up of a cubic scroll in a cubic curve.

In particular we have the following.
\begin{prop} \label{trig}
Let $(S,L)$ be a polarized K3 surface of genus 10 such that $L$ admits a $g^1_5$
induced by an elliptic curve $E$ but no $g^1_4$. If moreover $|L-E|$ is not trigonal,
then $(S,L)$ is a proper linear section of one of the four considered
degenerations of $G_2$.
\end{prop}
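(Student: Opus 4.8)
The plan is to obtain the statement as the non-trigonal branch of the dichotomy established in the paragraphs immediately above. There we showed, under the standing hypotheses on $(S,L)$, that $|L-E|$ is base point free, big, nef and non-hyperelliptic with $h^0(\mathcal{O}(L-E))=6$, so that $\varphi_{|L-E|}$ is a birational morphism onto a surface $\bar{S}$ of degree $8$ in $\mathbb{P}^5$, that the image $C=\varphi_{|L-E|}(\Gamma)$ of a member $\Gamma\in|L-2E|$ is a cubic curve spanning a $\mathbb{P}^3$, and that exactly two situations can occur: either (1) $|L-E|$ is trigonal and $\bar{S}$ lies on a cubic threefold scroll, or (2) $\bar{S}$ is a complete intersection of three quadrics. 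The first step is the trivial but decisive one: the assumption that $|L-E|$ is not trigonal rules out alternative (1), so we are necessarily in case (2).

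In case (2) I would exhibit $\psi(S)$ directly as a linear section. Since $C\subset\bar{S}$ and $\bar{S}=Q_1\cap Q_2\cap Q_3$ is cut out by three quadrics, each $Q_i$ vanishes on $C$ and is therefore a member of the linear system of quadrics through $C$ defining the map $\varphi$ of Proposition \ref{maptoP5}. Writing $Q_i=\varphi^{-1}(H_i)$ for the hyperplane $H_i\subset\mathbb{P}^{13}$ associated with $Q_i$, we have $\bar{S}=\varphi^{-1}(H_1\cap H_2\cap H_3)$, and hence
$$\psi(S)=\varphi(\bar{S})=\hat{G}_2\cap H_1\cap H_2\cap H_3,$$
where $\hat{G}_2$ is understood to be replaced by the appropriate degeneration when $C$ is reducible. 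Since $\psi$ was shown to be given by the complete system $|L|$ and $\psi(S)$ to span $\mathbb{P}^{10}$, this realizes $(S,L)$ itself, not merely an abstract image, as a proper linear section of expected dimension $2$.

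Finally the four varieties are produced uniformly from the three geometric types of the cubic $C$ already listed: a twisted cubic yields $\hat{G}_2$, a conic meeting a line yields the variety attached to the form $\omega_1$, and three lines spanning a $\mathbb{P}^3$ yield, according to whether they form a chain or pass through a common point, one of the two toric Gorenstein Fano five-folds. These are exactly the four considered degenerations of $G_2$, so in every case $(S,L)$ is a proper linear section of one of them.

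The step demanding genuine care is the claim that $\hat{G}_2\cap H_1\cap H_2\cap H_3$ is a \emph{proper} section, reduced and of the expected dimension $2$, with $\varphi$ restricting on $\bar{S}$ to the already analyzed birational morphism $\varphi_{|L-E|}$. Concretely, one must check that $\bar{S}$ is not swept into the locus that $\varphi$ contracts --- the secant variety of $C$, which collapses onto the distinguished plane --- so that the three hyperplanes meet $\hat{G}_2$ transversally away from that locus and the count $5-3=2$ is exact. Granting the description of $\varphi$ in Proposition \ref{maptoP5}, this transversality is where the verification concentrates; the remaining assertions are direct consequences of the preceding case analysis.
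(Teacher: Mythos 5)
Your proposal is correct and follows essentially the same route as the paper: the paper likewise deduces Proposition \ref{trig} directly from the preceding dichotomy, using non-trigonality to land in the complete-intersection-of-three-quadrics case and then realizing $\psi(S)$, given by the complete system $|L|$ and spanning a $\mathbb{P}^{10}$, as a proper linear section of the degeneration determined by the type of the cubic $C$ (twisted cubic, conic plus line, or one of the two three-line configurations). Your explicit identification $\psi(S)=\hat{G}_2\cap H_1\cap H_2\cap H_3$ and the flagged transversality verification simply spell out what the paper leaves implicit in its terse ``In particular we have the following.''
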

\begin{rem}\label{trigonal}
The system $|L-E|$ is trigonal on $S$ if and only if there exists an elliptic curve $E'$ on $S$ such that one of the following holds:
\begin{enumerate}
 \item $L.E'=6$ and $E.E'=3$
\item $L.E'=5$ and $E.E'=2$.
\end{enumerate}
Now observe that in both cases we obtain a second $g^1_5$ on $L$. In the first case it is given by the restriction of $E'$ and in the 
second we get at least a $g^2_7$ by restricting $|L-E-E'|$, the latter gives rise to a $g^1_5$ by composing the map with a projection 
from the singular point of the image by the $g^2_7$ (there is a singular point by Noether's genus formula).   
\end{rem}
We can now easily prove Proposition \ref{one g15}. 
\begin{proof}{Proof of Proposition \ref{one g15}}
Indeed the existence of exactly one $g^1_5$ excludes both the existence of a $g^2_7$ and of a $g^1_4$, hence the $g^1_5$ is induced
by an elliptic pencil $|E|$ on $S$. Moreover by Remark \ref{trigonal} we see that $|L-E|$ is then trigonal.   
\end{proof}

The Proposition \ref{Pic 2} follows directly from Proposition \ref{trig} and the fact that in the more degenerate case we clearly 
get a higher Picard number due to the decomposition of $C$.

\begin{rem} The K3 surfaces obtained as sections of considered varieties fit to the case
$g=10$, $c=3$, $D^2=0$, and scroll of type $(2,1,1,1,1)$ from \cite{JK} (Observe that there
 is a misprint in the table, because $H^0(L-2D)$ should be 1 in this case). The embedding in the scroll
 corresponds to the induced embedding in the projection of $\hat{G}_2$ from the distinguished plane.
\end{rem}
 \section{Acknowledgements}
I would like to thank K. Ranestad, J. Buczy\'nski and G.Kapustka for their help.
I acknowledge also the referee for useful comments.
The project was partially supported by SNF, No 200020-119437/1 and by MNSiW, N N201 414539.

\end{document}